\def\tilde{\widetilde}
\def\hat{\widehat}
\renewcommand\bar{\overline}
\def\fm{{\mathfrak m}}
\def\CC{{\mathbb C}}
\def\PP{{\mathbb P}}
\def\hat{\widehat}
\def\im{\mathop{\rm im}\nolimits}
\def\GL{\mathop{\rm GL}\nolimits}
\def\SL{\mathop{\rm SL}\nolimits}
\def\grad{\mathop{\rm grad}\nolimits}
\def\Res{\mathop{\rm Res}\nolimits}
\def\Gor{\mathop{\rm Gor}\nolimits}
\def\Soc{\mathop{\rm Soc}\nolimits}
\def\rank{\mathop{\rm rank}\nolimits}
\def\Jac{\mathop{\rm Jac}\nolimits}
\def\jac{\mathop{\rm jac}\nolimits}
\def\Gr{\mathop{\rm Gr}\nolimits}
\newcommand{\bA}{\mathbf{A}}
\newcommand{\bbf}{\mathbf{f}}
\def\Char{\mathop{\rm char}\nolimits}
\newcommand\co{\colon} 
\newtheorem{theorem}{THEOREM}[section]
\newtheorem{proposition}[theorem]{Proposition}
\newtheorem{conjecture}[theorem]{Conjecture}
\theoremstyle{definition}
\newtheorem{prop}[theorem]{Proposition}
\newtheorem{lemma}[theorem]{Lemma}
\theoremstyle{remark}
\newtheorem{remark}[theorem]{Remark}
\def\blfootnote{\xdef\@thefnmark{}\@footnotetext}
\begin{document}

\title[On the image of the associated form morphism]{On the image of the associated form morphism}\blfootnote{{\bf Mathematics Subject Classification:} 14L24, 13A50, 32S25}\blfootnote{{\bf Keywords:} classical invariant theory, geometric invariant theory, catalecticant varieties, associated forms, isolated hypersurface singularities.}
\author[Isaev]{Alexander Isaev}

\address{Mathematical Sciences Institute\\
Australian National University\\
Acton, ACT 2601, Australia}
\email{alexander.isaev@anu.edu.au}

\maketitle

\thispagestyle{empty}

\pagestyle{myheadings}

\begin{abstract} 
Let $\CC[x_1,\dots,x_n]_{d+1}$ be the vector space of homogeneous forms of degree $d+1$ on $\CC^n$, with $n,d\ge 2$. In earlier articles by J. Alper, M. Eastwood and the author, we introduced a morphism, called $A$, that assigns to every nondegenerate form the so-called associated form lying in the space $\CC[y_1,\dots,y_n]_{n(d-1)}$. One of the reasons for our interest in $A$ is the conjecture---motivated by the well-known Mather-Yau theorem on complex isolated hypersurface singularities---asserting that all regular $\GL_n$-invariant functions on the affine open subvariety $\CC[x_1,\dots,x_n]_{d+1,\Delta}$ of forms with nonvanishing discriminant can be obtained as the pull-backs by means of $A$ of the rational $\GL_n$-invariant functions on $\CC[y_1,\dots,y_n]_{n(d-1)}$ defined on $\im(A)$. The morphism $A$ factors as $A=\bA\circ \grad$, where $\grad$ is the gradient morphism and $\bA$ assigns to every $n$-tuple of forms of degree $d$ with nonvanishing resultant a form in $\CC[y_1,\dots,y_n]_{n(d-1)}$ defined analogously to $A(f)$ for a nondegenerate $f$. In order to establish the conjecture, it is important to study the image of $\bA$. In the present paper, we show that $\im(\bA)$ is an open subset of an irreducible component of each of the so-called catalecticant varieties $V$, $\Gor(T)$ and describe the closed complement to $\im(\bA)$, at the same time clarifying and extending known results on these varieties. Furthermore, for $n=3$, $d=2$ we give a description of the complement to $\im(\bA)$ via the zero locus of the Aronhold invariant of degree 4, which establishes an analogy with the case $n=2$ where this complement is known to be the vanishing locus of the catalecticant for any $d\ge 2$.
\end{abstract}

\section{Introduction}\label{intro}
\setcounter{equation}{0}

This paper is motivated by a new construction in classical invariant theory that originated in article \cite{EI} and was further explored in \cite{AI1}, \cite{AI2}, \cite{AIK}. Fix integers $n\ge 2$ and $d\ge 2$ and let $\CC[x_1,\dots,x_n]_{d+1,\Delta}$ be the complex affine open subvariety of the space $\CC[x_1,\dots,x_n]_{d+1}$ of homogeneous forms of degree $d+1$ in $n$ variables where the discriminant $\Delta$ does not vanish. Consider the Milnor algebra $M(f) := \CC[x_1,\dots,x_n]/(f_{x_{{}_1}}, \ldots, f_{x_{{}_n}})$ of the isolated singularity at the origin of the hypersurface in $\CC^n$ defined by $f\in  \CC[x_1,\dots,x_n]_{d+1,\Delta}$ and let $\fm \subset M(f)$ be the maximal ideal. One can then introduce a form on the $n$-dimensional quotient $\fm/\fm^2$ with values in the one-dimensional socle $\Soc(M(f))$ of $M(f)$ as follows:
$$
\fm/\fm^2 	 \to \Soc(M(f)), \quad z  \mapsto \hat z^{\,n(d-1)},
$$
where $\hat z$ is any element of ${\mathfrak m}$ that projects to $z\in{\mathfrak m}/{\mathfrak m}^2$.  There is a canonical isomorphism ${\mathfrak m}/{\mathfrak m}^2\cong \CC^n$ and, since the Hessian of $f$ generates the socle, there is also a canonical isomorphism $\Soc(M(f)) \cong \CC$. Hence, one obtains a form $A(f)$ of degree $n(d-1)$ on  $\CC^n$, which is called the {\it associated form of}\, $f$. This form is very natural; in particular, it is a Macaulay inverse system for the Milnor algebra $M(f)$.

The main object of our study in \cite{AI1}, \cite{AI2}, \cite{AIK} was the morphism
$$
A\co \CC[x_1,\dots,x_n]_{d+1,\Delta} \to \CC[y_1,\dots,y_n]_{n(d-1)}, \quad f \mapsto A(f)
$$
of affine varieties. As first observed in \cite{EI}, for certain values of $n$ and $d$ one can recover all $\GL_n$-invariant rational functions on forms of degree $d+1$ from those on forms of degree $n(d-1)$ by evaluating the latter on associated forms, i.e., by composing them with $A$. Motivated by the above fact, in \cite{AI1} we proposed a conjecture asserting that an analogous statement holds for all $n$ and $d$ (cf. \cite[Conjecture 3.2]{EI}):

\begin{conjecture}\label{C:main} For any regular $\GL_n$-invariant function $J$ on $\CC[x_1,\dots,x_n]_{d+1,\Delta}$ there exists a rational\, $\GL_n$-invariant function $\tilde J$ on $\CC[y_1,\dots,y_n]_{n(d-1)}$ defined on the image of $A$ such that $J=\tilde J \circ A$.
\end{conjecture}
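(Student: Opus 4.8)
The plan is to derive Conjecture~\ref{C:main} from the Mather--Yau theorem together with a sufficiently precise description of $\im(A)$, the latter accessed through the factorization $A=\bA\circ\grad$ and the analysis of $\im(\bA)$ carried out in this paper. The conjecture asks that every regular $\GL_n$-invariant $J$ on $\CC[x_1,\dots,x_n]_{d+1,\Delta}$ be of the form $\tilde J\circ A$ for some rational $\GL_n$-invariant $\tilde J$ on $\CC[y_1,\dots,y_n]_{n(d-1)}$ whose domain of definition contains $\im(A)$. I would obtain such a $\tilde J$ by first producing a rational inverse of $A$ modulo the $\GL_n$-action, transporting $J$ through it, and then showing that the resulting rational function on $\CC[y_1,\dots,y_n]_{n(d-1)}$ has no poles along $\im(A)$. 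Note that the invariance of $\tilde J$ is automatic: since $A$ is $\GL_n$-equivariant up to a scalar factor and the $\GL_n$-action on $\CC[y_1,\dots,y_n]_{n(d-1)}$ already contains all scalings through the scalar matrices, any $\tilde J\circ A$ with $\tilde J$ a $\GL_n$-invariant is again $\GL_n$-invariant; thus only the existence of $\tilde J$ is at issue.

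For the first step, let $\bar A$ denote the dominant rational map induced by $A$ between the corresponding $\GL_n$-quotients, with target $\overline{\im(A)}\gitq\GL_n$. Since $\CC[x_1,\dots,x_n]_{d+1,\Delta}$ is irreducible, so is $\overline{\im(A)}$, and hence so is its $\GL_n$-quotient. To see that $\bar A$ is birational it then suffices to check that $A$ is injective modulo $\GL_n$: if $A(f_1)$ and $A(f_2)$ lie in one $\GL_n$-orbit then, because $A(f)$ is a Macaulay inverse system for the graded Artinian Gorenstein algebra $M(f)$ — so that $M(f)$ is recovered as a graded algebra from $A(f)$ — the Milnor algebras $M(f_1)$ and $M(f_2)$ are isomorphic as graded $\CC$-algebras, and a graded version of the Mather--Yau theorem then forces $f_2=g\cdot f_1$ for some $g\in\GL_n$. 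A generically injective dominant morphism of irreducible varieties over a field of characteristic zero is birational, so $\bar A$ admits a rational inverse, and $\tilde J_0:=J\circ\bar A^{-1}$ is a well-defined rational $\GL_n$-invariant on $\overline{\im(A)}$.

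It remains to lift $\tilde J_0$ to a rational $\GL_n$-invariant $\tilde J$ on $\CC[y_1,\dots,y_n]_{n(d-1)}$ that is regular at every point of $\im(A)$, not merely on a dense open subset. Here the results of the present paper are essential: one uses that $\im(\bA)$ is an open subset of an irreducible component $W$ of each of the catalecticant varieties $V$, $\Gor(T)$, together with the explicit description of the closed complement $W\setminus\im(\bA)$ and of the way $\im(A)=\bA(\grad(\cdot))$ sits inside $\im(\bA)$. From this one wants to conclude that $\bar A$ is in fact an isomorphism onto its image and that $J\circ\bar A^{-1}$ extends without poles precisely across $\im(A)$, using normality and the known defining equations of $W$. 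The $n=3$, $d=2$ case, where the complement to $\im(\bA)$ is the zero locus of the Aronhold invariant — mirroring the $n=2$ situation in which it is the vanishing locus of the catalecticant — is the model for making this analysis effective.

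I expect the decisive obstacle to be precisely this last step: everything hinges on controlling $\im(\bA)$, and through it $\im(A)$, finely enough — its irreducibility and expected dimension, the geometry (normality, singular locus) of the ambient catalecticant variety, and an effective description of the closed complement — which is the technical heart of the paper and at present is fully available only in special cases, so that the argument above yields the conjecture only for those $n$ and $d$. A second, genuine difficulty is the graded form of the Mather--Yau theorem invoked in the birationality step, namely that a graded isomorphism between the Milnor algebras of two homogeneous forms of equal degree must be induced by a linear change of coordinates. A third, more bookkeeping-type point is that the conjecture is literally a statement about $A$ while the paper studies $\bA$: one must keep careful track of how gradients sit among all $n$-tuples of forms of degree $d$, since $\im(A)$ is a proper subvariety of $\im(\bA)$ in general and it is on $\im(A)$, not on $\im(\bA)$, that $\tilde J$ is required to be defined.
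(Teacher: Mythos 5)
The statement you were asked to prove is Conjecture~\ref{C:main}, and the paper itself does not prove it: it is stated as an open conjecture, known only for $n=2$ and any $d$ (by \cite{AI2}) and for $n=3$, $d=2$ (by \cite{EI}), with a weaker variant for general $n,d$ established in \cite{AI1}. The present paper only proves results \emph{towards} it, namely Theorem~\ref{main8} and Proposition~\ref{psind3} describing $\im(\bA)$. So there is no ``paper proof'' to match, and the question is whether your argument closes the conjecture on its own. It does not.

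Concretely, the first half of your plan (recovering $M(f)$ from $A(f)$ as an inverse system, invoking a graded Mather--Yau statement to get generic injectivity of $A$ modulo $\GL_n$, and transporting $J$ through a rational inverse of the induced map on quotients) at best produces a rational $\GL_n$-invariant $\tilde J$ with $J=\tilde J\circ A$ on a \emph{dense open} subset of $\CC[x_1,\dots,x_n]_{d+1,\Delta}$ --- this is essentially the weaker variant already proved in \cite{AI1}, and even there the birationality step requires care (separatedness of orbits in the target, the twisted equivariance of $A$, and a genuinely graded version of Mather--Yau). The actual content of the conjecture is the second half: that $\tilde J$ can be chosen with no poles at \emph{any} point of $\im(A)$. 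Your treatment of this step (``one wants to conclude \dots using normality and the known defining equations'') is a wish, not an argument: Theorem~\ref{main8} identifies $\im(\bA)=U_{\Res}$ as an open subset of a component $V^{\circ}$ with complement described by the set $Z$, but it supplies no normality statement for $V^{\circ}$, no control of the indeterminacy locus of the rational inverse, and no mechanism for extending $J\circ\bar A^{-1}$ across all of $\im(A)$; moreover $\im(A)$ is in general a proper subset of $\im(\bA)$, so even a complete understanding of $U_{\Res}$ (or of the Aronhold/catalecticant descriptions in the special cases) would not by itself finish the argument. As you acknowledge in your final paragraph, these are exactly the unresolved points, so the proposal is a programme consistent with the paper's strategy rather than a proof of the statement.
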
 

In other words, the conjecture asserts that the invariant theory of forms of degree $d+1$ can be extracted, in a canonical way, from that of forms of degree $n(d-1)$ at least at the level of rational invariant functions. While this statement is quite intriguing from the purely invariant-theoretic viewpoint, it was originally motivated---as explained in \cite{EI}, \cite{AI1}---by complex singularity theory, specifically, by the well-known Mather-Yau theorem (see \cite{MY} and also \cite{Be}, \cite{Sh}, \cite[Theorem 2.26]{GLS}). In \cite{AI2}, Conjecture \ref{C:main} was shown to hold for binary forms of any degree, and in \cite{AI1} its weaker variant was established for arbitrary $n, d$.

In this paper, we obtain results towards settling the conjecture in full generality, which are at the same time of interest in a broader algebraic context. The morphism $A$ factors as $A=\bA\circ \grad$, where $\grad:\CC[x_1,\dots,x_n]_{d+1} \to\CC[x_1,\dots,x_n]_{d}^{\oplus n}$
is the gradient morphism and $\bA:(\CC[x_1,\dots,x_n]_{d}^{\oplus n})_{\Res}\to\CC[y_1,\dots,y_n]_{n(d-1)}$ assigns to every $n$-tuple $\bbf=(f_1,\dots,f_n)$ of forms of degree $d$ with nonvanishing resultant the {\it associated form of}\, $\bbf$ defined analogously to $A(f)$, with the partial derivative $f_{x_{{}_j}}$ replaced by $f_j$ for all $j$. Note that for every $\bbf$ the form $\bA(\bbf)$ is a Macaulay inverse system for the zero-dimensional complete intersection algebra $M(\bbf) := \CC[x_1, \ldots, x_n] / (f_1, \ldots, f_n)$. As explained in \cite[Section 3]{AI2}, in order to establish Conjecture \ref{C:main} for all $n,d$, it is important to study the image of $\bA$. In this paper we show that $\im(\bA)\subset \CC[y_1,\dots,y_n]_{n(d-1)}$ is an open subset of an irreducible component of each of the {\it catalecticant varieties}\, $V$ and $\Gor(T)$ (see Section \ref{schemes} for the definitions) and give a description of the closed complement to $\im(\bA)$. We note that a number of other properties of the morphism $\bA$ (as well as the gradient morphism) essential for confirming Conjecture \ref{C:main} were obtained in the recent paper \cite{F}.

The irreducible components of catalecticant varieties are of general interest and have been studied regardless of Conjecture \ref{C:main} (see \cite[Chapter 4]{IK} and references therein for details). In particular, in \cite[Theorem 4.17]{IK} it was shown that $\Gor(T)$ has an irreducible component containing $\im(\bA)$ as a dense subset and the dimension of this component was found. On the other hand, an analogous fact for $V$ (which is the catalecticant variety most relevant to our study of the morphism $\bA$) appears to be only known in the cases (i) $n=3$, $d\ge 3$, (ii) $n=4$, $d=2,3$, (iii) $n=5$, $d=2$ (see \cite[Theorem 4.19 and Corollary 4.18]{IK}), and one of our aims is to bring the results on $V$ in line with those on $\Gor(T)$.

In this paper, we, first of all, refine and extend Theorems 4.17 and 4.19 of \cite{IK}. Namely, in Section \ref{schemes} we show that the set $\im(\bA)$ is open (not just dense) in an irreducible component of each of $V$, $\Gor(T)$ for all $n,d$ and explicitly describe the closed complement to $\im(\bA)$ (see Theorem \ref{main8}). Note that finding a suitable characterization of this complement is important for resolving Conjecture \ref{C:main} (see Remark \ref{complement}). As the proof of Theorem 4.17 in \cite{IK} is quite brief, we also provide an alternative derivation---with full details---of the dimension formula for $\im(\bA)$. Note that, although we assume the base field to be $\CC$, our arguments work for any algebraically closed field $k$ of characteristic zero and even apply to the case $\Char(k)>n(d-1)$, with $n(d-1)$ being the socle degree of $M(\bbf)$ for all $\bbf=(f_1,\dots,f_n)\in(k[x_1,\dots,x_n]_{d}^{\oplus n})_{\Res}$. We also stress that our clarifications and extensions of results of \cite{IK} only apply in the case of zero-dimensional complete intersections with homogeneous ideal generators of equal degrees.

In fact, ideally, one would like to have a better description of the complement to $\im(\bA)$ than the one provided by Theorem \ref{main8}. Namely, it would be desirable to represent it as the intersection of the relevant irreducible component of $V$ with the zero locus of an $\SL_n$-invariant form on $\CC[y_1,\dots,y_n]_{n(d-1)}$. This is indeed possible for $n=2$, in which case the $\SL_2$-invariant in question is the catalecticant (see \cite[Proposition 4.3]{AI2}). In Section \ref{aronhold} we show that such a representation is also valid for $n=3$, $d=2$, with the corresponding $\SL_3$-invariant being the Aronhold invariant of degree 4 (see Proposition \ref{psind3}). 

{\bf Acknowledgements.} This work is supported by the Australian Research Council.

\section{Preliminaries on associated forms and the morphism $\bA$}\label{morphism}
\setcounter{equation}{0}

In this section we introduce the main object of our study. What follows is an abridged version of the exposition given in \cite[Section 2]{AI2}.

Fix $n\ge 2$ and for any nonnegative integer $j$ define $\CC[x_1, \ldots, x_n]_j$ to be the vector space of homogeneous forms of degree $j$ in $x_1,\dots,x_n$ over $\CC$. Clearly, one has $\CC[x_1,\dots,x_n]=\oplus_{j=0}^{\infty}\CC[x_1, \ldots, x_n]_j$. Next, fix $d\ge 2$ and consider the vector space $\CC[x_1, \ldots, x_n]_d^{\oplus n}$ of $n$-tuples $\bbf = (f_1, \ldots, f_n)$ of forms of degree $d$.  Recall that the resultant $\Res$ on the space $\CC[x_1, \ldots, x_n]_d^{\oplus n}$ is a form with the property that $\Res(\bbf) \neq 0$ if and only if $f_1, \ldots, f_n$ have no common zeroes away from the origin (see, e.g.,  \cite[Chapter 13]{GKZ}).

For $\bbf = (f_1, \ldots, f_n) \in \CC[x_1, \ldots, x_n]_d^{\oplus n}$, we now introduce the algebra
$$
M(\bbf) := \CC[x_1, \ldots, x_n] / (f_1, \ldots, f_n)
$$
and recall a well-known lemma (see, e.g., \cite[Lemma 2.4]{AI2} and \cite[p.~187]{SS}):

\begin{lemma}\label{fourconds} \it The following statements are equivalent:
\begin{enumerate}
	\item[\rm (1)] the resultant $\Res(\bbf)$ is nonzero;
	\item[\rm (2)] the algebra $M(\bbf)$ has finite vector space dimension;
	\item[\rm (3)] the morphism $\bbf \co {\mathbb A}^n(\CC) \to {\mathbb A}^n(\CC)$ is finite;
	\item[\rm (4)] the $n$-tuple $\bbf$ is a homogeneous system of parameters of $\CC[x_1,\dots,x_n]$, i.e., the Krull dimension of $M(\bbf)$ is $0$.
\end{enumerate}
If the above conditions are satisfied, then $M(\bbf)$ is a local standard graded complete intersection algebra whose socle $\Soc(M(\bbf))$ is generated in degree $n(d-1)$ by the image $\bar{\jac(\bbf)} \in M(\bbf)$ of the Jacobian $\jac(\bbf):= \det \Jac(\bbf)$, where $\Jac(\bbf)$ is the Jacobian matrix $\big({\partial f_i}/{\partial x_j} \big)_{i,j}$.\end{lemma}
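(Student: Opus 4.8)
The plan is to first establish the chain $(1)\Leftrightarrow(2)\Leftrightarrow(3)\Leftrightarrow(4)$ by standard commutative algebra, and then deduce the structural assertions about $M(\bbf)$. By the defining property of the resultant (see \cite[Chapter 13]{GKZ}), $\Res(\bbf)\ne0$ holds exactly when the common zero locus $Z:=V(f_1,\dots,f_n)\subset\AA^n(\CC)$ consists of the origin alone; since the $f_i$ are homogeneous, $Z$ is a cone through the origin, so $Z=\{0\}$ if and only if $\dim Z=0$, and by the Nullstellensatz $\dim Z=\Krulldim\big(\CC[x_1,\dots,x_n]/\sqrt{(f_1,\dots,f_n)}\big)=\Krulldim M(\bbf)$; this yields $(1)\Leftrightarrow(4)$. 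Since $M(\bbf)$ is a finitely generated connected graded $\CC$-algebra, having Krull dimension $0$ is equivalent to being finite-dimensional over $\CC$, which gives $(2)\Leftrightarrow(4)$. Finally, $\bbf$ corresponds to the graded homomorphism $\varphi\co\CC[y_1,\dots,y_n]\to\CC[x_1,\dots,x_n]$, $y_i\mapsto f_i$ (grading $\CC[y_1,\dots,y_n]$ by $\deg y_i=d$), and $\bbf$ is finite precisely when $\CC[x_1,\dots,x_n]$ is a finite $\CC[y_1,\dots,y_n]$-module via $\varphi$; regarding it as a non-negatively graded module over the graded ring $\CC[y_1,\dots,y_n]$, the graded Nakayama lemma shows this holds if and only if $\CC[x_1,\dots,x_n]\otimes_{\CC[y_1,\dots,y_n]}\CC=M(\bbf)$ is finite-dimensional over $\CC$, giving $(2)\Leftrightarrow(3)$.

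Assume now the equivalent conditions hold, so that $M(\bbf)$ is a connected graded Artinian $\CC$-algebra; it is then local with maximal ideal $\fm=\bigoplus_{j\ge1}M(\bbf)_j$. Since $\CC[x_1,\dots,x_n]$ is Cohen--Macaulay and the ideal $(f_1,\dots,f_n)$ has codimension equal to the number $n$ of its generators (because $\Krulldim M(\bbf)=0$), the forms $f_1,\dots,f_n$ constitute a regular sequence; hence $M(\bbf)$ is a complete intersection, therefore Gorenstein, and so $\dim_\CC\Soc(M(\bbf))=1$. Moreover, as $f_1,\dots,f_n$ is a homogeneous regular sequence of forms of degree $d$, the Hilbert series of $M(\bbf)$ equals $\prod_{i=1}^{n}\frac{1-t^{d}}{1-t}=(1+t+\dots+t^{d-1})^{n}$, a palindromic polynomial of degree $n(d-1)$ with leading coefficient $1$; thus $\dim_\CC M(\bbf)_{n(d-1)}=1$, while $M(\bbf)_j=0$ for $j>n(d-1)$. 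The latter forces $M(\bbf)_{n(d-1)}\subseteq\Soc(M(\bbf))$, and a dimension count then gives $\Soc(M(\bbf))=M(\bbf)_{n(d-1)}$, so the socle is generated in degree $n(d-1)$.

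It remains to identify $\bar{\jac(\bbf)}$ as a generator of the socle. Using $\Char\CC=0$, Euler's identity yields $f_i=\sum_{j}g_{ij}x_j$ with $g_{ij}:=\tfrac1d\,\partial f_i/\partial x_j$ homogeneous of degree $d-1$; hence, writing $\bar G$ for the image of the matrix $G:=(g_{ij})_{i,j}$ under reduction modulo $(f_1,\dots,f_n)$, we have $\bar G\cdot(\bar x_1,\dots,\bar x_n)=0$ in $M(\bbf)$. Left-multiplication by the adjugate of $\bar G$ gives $\det(\bar G)\,\bar x_j=0$ for all $j$, so $\det(\bar G)\in\Ann_{M(\bbf)}(\fm)=\Soc(M(\bbf))$; since $\det G=d^{-n}\jac(\bbf)$, this shows $\bar{\jac(\bbf)}\in\Soc(M(\bbf))$. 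As $\Soc(M(\bbf))=M(\bbf)_{n(d-1)}$ is one-dimensional, the proof is complete once we know $\bar{\jac(\bbf)}\ne0$. This is the classical fact that the image of the Jacobian determinant of a homogeneous regular sequence generates the socle of the resulting Artinian complete intersection (see \cite[p.~187]{SS}), which I would cite directly; as a sanity check, for $\bbf=(x_1^{d},\dots,x_n^{d})$ one has $\jac(\bbf)=d^{n}x_1^{d-1}\cdots x_n^{d-1}$, a nonzero multiple of the socle generator of $\CC[x_1,\dots,x_n]/(x_1^{d},\dots,x_n^{d})$. I expect this last nonvanishing to be the only genuinely nontrivial point: the remaining steps merely assemble standard facts, whereas establishing $\bar{\jac(\bbf)}\ne0$ for \emph{all} $\bbf$ with $\Res(\bbf)\ne0$ really requires the Scheja--Storch description of the trace form of the finite flat extension $\varphi$ (equivalently, an argument via Macaulay inverse systems), since a priori a section of the rank-one family $\bbf\mapsto M(\bbf)_{n(d-1)}$ over $(\CC[x_1,\dots,x_n]_{d}^{\oplus n})_{\Res}$ need not be everywhere nonzero.
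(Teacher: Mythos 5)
Your proposal is correct, but note that the paper itself offers no proof of this lemma at all: it is stated as a well-known fact with pointers to \cite[Lemma 2.4]{AI2} and \cite[p.~187]{SS}, so there is no internal argument to compare against. Your writeup supplies the details those references encapsulate, and the individual steps are sound: the equivalence $(1)\Leftrightarrow(4)$ via the defining property of the resultant, the cone structure of the zero locus and the Nullstellensatz; $(2)\Leftrightarrow(4)$ for connected graded algebras; $(2)\Leftrightarrow(3)$ by graded Nakayama applied to $\CC[x_1,\dots,x_n]$ as a graded module over $\CC[y_1,\dots,y_n]$ with $\deg y_i=d$; the regular-sequence/complete-intersection conclusion from Cohen--Macaulayness of the polynomial ring; the Hilbert series $(1+t+\dots+t^{d-1})^n$ identifying the socle with the one-dimensional top degree $n(d-1)$; and the Euler-identity/adjugate trick placing $\bar{\jac(\bbf)}$ in the socle (here $\Char\CC=0$, or more generally $\Char$ prime to $d$, is genuinely used). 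You are also right to flag the nonvanishing of $\bar{\jac(\bbf)}$ as the only nontrivial input --- the Euler argument alone cannot exclude $\bar{\jac(\bbf)}=0$ --- and your citation of Scheja--Storch for it is exactly where the paper's own reference points, so deferring that step is consistent with the paper's treatment; a self-contained alternative would be to invoke the duality/trace-form result of \cite{SS} or the standard fact that the Jacobian of a homogeneous system of parameters generates the socle of the associated Artinian complete intersection.
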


\begin{remark}\label{hilbert}
As we pointed out in Lemma \ref{fourconds}, the algebra $M(\bbf)$ has a natural standard grading: $M(\bbf) = \bigoplus_{i=0}^{\infty} M(\bbf)_i$.  It is well-known (see, e.g., \cite[Corollary 3.3]{St}) that the corresponding Hilbert function $H(x):=\sum_{i=0}^{\infty}t_i\,x^i$, with $t_i:=\dim_\CC{M}({\mathbf f})_i$, is given by
\begin{equation}
H(x)=(x^{d-1}+\dots+x+1)^n.\label{hf}
\end{equation}
\end{remark}

Next, we let $(\CC[x_1, \ldots, x_n]_d^{\oplus n})_{\Res}$ be the affine open subvariety of $\CC[x_1, \ldots, x_n]_d^{\oplus n}$ that consists of all $n$-tuples of forms with nonzero resultant. We now define the {\it associated form}\, $\bA(\bbf) \in \CC[y_1, \ldots, y_n]_{n(d-1)}$ of $\bbf=(f_1,\dots,f_n) \in (\CC[x_1, \ldots, x_n]_d^{\oplus n})_{\Res}$ by the formula
$$(y_1 \bar{x}_1 + y_2 \bar{x}_2 + \cdots + y_n \bar{x}_n)^{n(d-1)} = \bA(\bbf)(y_1, \ldots, y_n) \cdot \bar{\jac(\bbf)} \in M(\bbf),$$
where $\bar{x}_i \in M(\bbf)$ is the image of $x_i$. It is not hard to see that the induced map
$$
\bA \co (\CC[x_1, \ldots, x_n]_d^{\oplus n})_{\Res} \to \CC[y_1, \ldots, y_n]_{n(d-1)}, \quad \bbf \mapsto \bA(\bbf)
$$
is a morphism of affine varieties. This morphism is quite natural; in particular, it has an important equivariance property (see \cite[Lemma 2.7]{AI2}). In article \cite{AI2} we studied $\bA$ in relation to Conjecture \ref{C:main} stated in the introduction. 

We will now interpret $\bA$ in different terms. Recall that the algebra $\CC[y_1, \ldots, y_n]$ is a $\CC[x_1, \ldots, x_n]$-module via differentiation:
\begin{equation}
(h \diamond F) (y_1, \ldots, y_n) := h\left(\frac{\partial}{\partial y_1}, \ldots, \frac{\partial}{\partial y_n}\right)F(y_1, \ldots, y_n),\label{pp}
\end{equation}
where $h \in \CC[x_1, \ldots, x_n]$ and $F \in \CC[y_1, \ldots, y_n]$.  For a positive integer $j$, differentiation induces a perfect pairing
$$ 
\CC[x_1, \ldots, x_n]_j \times \CC[y_1, \ldots, y_n]_j \to \CC, \quad (h, F) \mapsto h \diamond F;
$$
it is often referred to as the {\it polar pairing}.  For $F \in \CC[y_1, \ldots, y_n]_j$, we now introduce the homogenous ideal, called the {\it annihilator}\, of $F$,
$$
F^{\perp} := \{h\in \CC[x_1, \ldots, x_n]\, \mid \, h\diamond F = 0 \} \subset \CC[x_1, \ldots, x_n],
$$
which is clearly independent of scaling and thus is well-defined for $F$ in the projective space $\PP(\CC[y_1, \ldots, y_n]_j)$. It is well-known that the quotient $\CC[x_1, \ldots, x_n] / F^{\perp}$ is a standard graded local Artinian Gorenstein algebra of socle degree $j$ and the following holds (cf.~\cite[Lemma 2.12]{IK}):

\begin{proposition} \label{prop-correspondence}
The correspondence $F \mapsto \CC[x_1, \ldots, x_n]/F^{\perp}$ induces a bijection
$$
\PP(\CC[y_1, \ldots, y_n]_j)  \to
\left\{ 
 	\begin{array}{l} 
		\text{local Artinian Gorenstein algebras $\CC[x_1, \ldots, x_n]/I$}\\ 
		\text{of socle degree $j$, where the ideal $I$ is homogeneous}\\
		 \end{array} \right\}.
$$
\end{proposition}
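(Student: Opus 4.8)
The plan is to exhibit an explicit inverse correspondence in the other direction and check the two composites are the identity. Given a standard graded local Artinian Gorenstein algebra $R = \CC[x_1, \ldots, x_n]/I$ of socle degree $j$ with $I$ homogeneous, I first want to produce a form $F \in \CC[y_1, \ldots, y_n]_j$ (up to scaling) with $F^\perp = I$. The socle $\Soc(R)$ is one-dimensional and, since $I$ is homogeneous, it is concentrated in degree $j$; so the graded piece $R_j$ is one-dimensional. The multiplication pairing $R_i \times R_{j-i} \to R_j \cong \CC$ is perfect for each $i$ by the Gorenstein property — this is the key structural input and I would cite the standard fact (e.g. from \cite{IK}). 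Now use the polar pairing $\CC[x_1,\ldots,x_n]_j \times \CC[y_1,\ldots,y_n]_j \to \CC$ from the excerpt: the degree-$j$ part $I_j \subset \CC[x_1,\ldots,x_n]_j$ is a subspace of codimension one (since $\dim R_j = 1$), so its annihilator under the polar pairing is a one-dimensional subspace of $\CC[y_1,\ldots,y_n]_j$; pick a generator $F$. By construction $h \diamond F = 0$ for all $h \in I_j$.

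The next step is to verify $F^\perp = I$, i.e. that this degree-$j$ condition propagates to all degrees. First, $I \subseteq F^\perp$: if $h \in I_i$ with $i \le j$ and $g \in \CC[x_1,\ldots,x_n]_{j-i}$ is arbitrary, then $g \diamond (h \diamond F) = (gh) \diamond F$, and $gh \in I_j$, so this is zero; since the polar pairing in degree $i$ is perfect, $h \diamond F = 0$. (For $i > j$ everything lands in negative degree and vanishes trivially.) Conversely $F^\perp \subseteq I$: it suffices to compare graded pieces. In degree $i \le j$, the annihilator $(F^\perp)_i = \{h \in \CC[x_1,\ldots,x_n]_i : g \diamond (h \diamond F) = 0 \ \forall g \in \CC[x_1,\ldots,x_n]_{j-i}\}$ — using perfectness of the polar pairing in degree $j-i$, this says $gh$ is in the polar-annihilator of $F$ in degree $j$, i.e. $gh \in I_j$ for all $g$. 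One then argues $\dim (F^\perp)_i \le \dim I_i$ by a dimension count: the pairing $\big(\CC[x_1,\ldots,x_n]_i/(F^\perp)_i\big) \times \CC[x_1,\ldots,x_n]_{j-i} \to \CC$, $(h,g) \mapsto (gh)\diamond F$, has left kernel zero by definition, so $\dim \CC[x_1,\ldots,x_n]_i/(F^\perp)_i \le \dim R_{j-i} = \dim R_i$ (the last equality by Gorenstein duality), giving $\dim(F^\perp)_i \ge \dim I_i$; combined with $I_i \subseteq (F^\perp)_i$ from the first inclusion this forces equality in every degree. Hence $I = F^\perp$.

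Finally I check the two maps are mutually inverse and well-defined on the projective space. Starting from $F \in \PP(\CC[y_1,\ldots,y_n]_j)$, forming $R = \CC[x_1,\ldots,x_n]/F^\perp$ (a graded Artinian Gorenstein algebra of socle degree $j$, as recalled in the excerpt just before the statement), and running the construction above recovers the line $\CC \cdot F$, because $(F^\perp)_j$ is the polar-annihilator of $\CC \cdot F$ in degree $j$ and taking the polar-annihilator back returns $\CC \cdot F$ by non-degeneracy of the polar pairing; and scaling $F$ does not change $F^\perp$, so the assignment $F \mapsto R$ is constant on lines. In the other direction, starting from $R = \CC[x_1,\ldots,x_n]/I$ and producing $F$ as above gives $F^\perp = I$ by the previous paragraph, so $\CC[x_1,\ldots,x_n]/F^\perp = R$. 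This establishes the bijection. I expect the main obstacle to be the degree-by-degree argument that $F^\perp = I$ — specifically packaging the Gorenstein perfect-pairing property of $R$ together with the perfectness of the polar pairing to get the dimension equality in every degree, rather than just the inclusion $I \subseteq F^\perp$ which is essentially formal. The rest is bookkeeping with the polar pairing.
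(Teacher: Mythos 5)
Your overall architecture (choose $F$ spanning the one-dimensional polar annihilator of $I_j$, show $I\subseteq F^{\perp}$, then force equality degree by degree, then check the two assignments are mutually inverse) is the standard Macaulay--apolarity argument; the paper itself does not prove the proposition but refers to \cite[Lemma 2.12]{IK}, and your first inclusion $I\subseteq F^{\perp}$ and the mutual-inverse bookkeeping are fine (modulo the harmless slip that the perfect polar pairing you invoke there is the one in degree $j-i$, not $i$).

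However, the crucial step $F^{\perp}\subseteq I$ has a genuine gap: your dimension count goes in the wrong direction. You announce the target $\dim (F^{\perp})_i\le \dim I_i$, but the pairing $\bigl(\CC[x_1,\dots,x_n]_i/(F^{\perp})_i\bigr)\times \CC[x_1,\dots,x_n]_{j-i}\to\CC$ with zero left kernel only yields $\dim \CC[x_1,\dots,x_n]_i/(F^{\perp})_i\le \dim R_{j-i}=\dim R_i$, i.e. $\dim (F^{\perp})_i\ge \dim I_i$ --- which is exactly what the inclusion $I_i\subseteq (F^{\perp})_i$ already gives, so ``combined with the inclusion'' nothing forces equality. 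The fix is to put $R_i$, not $\CC[x_1,\dots,x_n]_i/(F^{\perp})_i$, in the left slot: since $I_j$ and $(F^{\perp})_j$ are nested subspaces of codimension one, they coincide, so $h\mapsto h\diamond F$ descends to an isomorphism $R_j\cong\CC$, and the induced pairing $R_i\times R_{j-i}\to\CC$, $(\bar h,\bar g)\mapsto (gh)\diamond F$, is the Gorenstein multiplication pairing up to this isomorphism, hence perfect. Nondegeneracy in the $R_i$ slot then says directly: if $h\notin I_i$ there is $g$ with $(gh)\diamond F=g\diamond(h\diamond F)\ne 0$, so $h\diamond F\ne0$; contrapositively $(F^{\perp})_i\subseteq I_i$, with no dimension count at all. (Equivalently, one can argue that the surjection $\CC[x_1,\dots,x_n]/I\to\CC[x_1,\dots,x_n]/F^{\perp}$ has zero kernel, since a nonzero ideal of the Artinian local algebra $R$ would meet the one-dimensional socle $R_j$, contradicting $I_j=(F^{\perp})_j$.) With this step repaired, the rest of your proof goes through.
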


\begin{remark}\label{invsysrem}
\noindent Given a homogenous ideal $I \subset \CC[x_1, \ldots, x_n]$ such that $\CC[x_1, \ldots, x_n]/I$ is a local Artinian Gorenstein algebra of socle degree $j$, Proposition \ref{prop-correspondence} implies that there is a  form $F \in  \CC[y_1, \ldots, y_n]_j$, unique up to scaling, such that $I = F^{\perp}$. In fact, the uniqueness part of this statement can be strengthened: if $I\subset F^{\perp}$, then $I = F^{\perp}$ and all forms with this property are mutually proportional. Indeed, $I\subset F^{\perp}$ implies $I_j\subset F^{\perp}$, where $I_j:=I\cap \CC[x_1, \ldots, x_n]_j$, and the claim follows from the fact that $I_j$ has codimension 1 in $\CC[x_1, \ldots, x_n]_j$. Any such form $F$ is called {\it a {\rm (}homogeneous{\rm )} Macaulay inverse system for $\CC[x_1, \ldots, x_n]/I$} and its image in $\PP(\CC[y_1, \ldots, y_n]_j)$ is called {\it the {\rm (}homogeneous{\rm )} Macaulay inverse system for $\CC[x_1, \ldots, x_n]/I$}.
\end{remark}

We have (see \cite[Proposition 2.11]{AI2}):

\begin{prop} \label{P:inverse-system} \it
For any $\bbf \in (\CC[x_1, \ldots, x_n]_{d}^{\oplus n})_{\Res}$, 
the form $\bA(\bbf)$ is a Macau\-lay inverse system for the algebra $M(\bbf)$.
\end{prop}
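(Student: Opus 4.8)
The plan is to identify the homogeneous ideal $I:=(f_1,\dots,f_n)\subset\CC[x_1,\dots,x_n]$ with the annihilator $\bA(\bbf)^\perp$; this is precisely the assertion that $\bA(\bbf)$ is a Macaulay inverse system for $M(\bbf)=\CC[x_1,\dots,x_n]/I$. First I would invoke Lemma \ref{fourconds}: $M(\bbf)$ is a local Artinian Gorenstein algebra whose socle sits in degree $n(d-1)$, which is exactly the degree of the form $\bA(\bbf)$. By the strengthened uniqueness recorded in Remark \ref{invsysrem}, it therefore suffices to prove the \emph{one-sided} inclusion $I\subseteq\bA(\bbf)^\perp$, i.e.\ that $h\diamond\bA(\bbf)=0$ for every homogeneous $h\in I$; the equality $I=\bA(\bbf)^\perp$ then follows automatically. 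I would also note at the outset that $\bA(\bbf)\ne0$: expanding the left-hand side of its defining relation gives $\sum_{|\alpha|=n(d-1)}\binom{n(d-1)}{\alpha}\,\bar x^{\alpha}\,y^{\alpha}$, and since the monomials $\bar x^{\alpha}$ of degree $n(d-1)$ span $M(\bbf)_{n(d-1)}\ne0$, this element of $M(\bbf)\otimes_\CC\CC[y_1,\dots,y_n]$ is nonzero, hence so is $\bA(\bbf)$.

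The heart of the argument is a reproducing identity. Write $\ell:=y_1\bar x_1+\dots+y_n\bar x_n$, regarded as an element of $M(\bbf)\otimes_\CC\CC[y_1,\dots,y_n]$ of degree $1$ in the variables $y_1,\dots,y_n$. Because $\partial\ell/\partial y_k=\bar x_k$ is a constant with respect to the $y$-differentiations, iterating the Leibniz rule on monomials and extending linearly yields, for any homogeneous $h$ of degree $j\le n(d-1)$,
\begin{equation}
h\big(\partial/\partial y_1,\dots,\partial/\partial y_n\big)\,\ell^{\,n(d-1)}=\frac{n(d-1)!}{\bigl(n(d-1)-j\bigr)!}\,\bar h\cdot\ell^{\,n(d-1)-j},\label{reproducing}
\end{equation}
an identity in $M(\bbf)\otimes_\CC\CC[y_1,\dots,y_n]$, where $\bar h\in M(\bbf)$ denotes the image of $h$. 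On the other hand, the operator $h(\partial/\partial y_1,\dots,\partial/\partial y_n)$ is $M(\bbf)$-linear, so applying it to the defining relation $\ell^{\,n(d-1)}=\bA(\bbf)(y_1,\dots,y_n)\cdot\bar{\jac(\bbf)}$ gives $h\big(\partial/\partial y_1,\dots,\partial/\partial y_n\big)\,\ell^{\,n(d-1)}=\bigl(h\diamond\bA(\bbf)\bigr)(y_1,\dots,y_n)\cdot\bar{\jac(\bbf)}$.

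Now I would conclude. Let $h\in I$ be homogeneous of degree $j$. If $j>n(d-1)$ then $h\diamond\bA(\bbf)=0$ for degree reasons. If $j\le n(d-1)$, then $\bar h=0$ in $M(\bbf)$, so the right-hand side of \eqref{reproducing} vanishes; comparing with the previous paragraph we get $\bigl(h\diamond\bA(\bbf)\bigr)(y_1,\dots,y_n)\cdot\bar{\jac(\bbf)}=0$ in $M(\bbf)\otimes_\CC\CC[y_1,\dots,y_n]$. Since $\bar{\jac(\bbf)}$ generates the nonzero socle $\Soc(M(\bbf))$ and is in particular a nonzero element of $M(\bbf)$, every coefficient in the $y$-variables of $h\diamond\bA(\bbf)$ must vanish, i.e.\ $h\diamond\bA(\bbf)=0$. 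Thus $I\subseteq\bA(\bbf)^\perp$, and by Remark \ref{invsysrem} together with the Gorenstein property and socle-degree statement of Lemma \ref{fourconds} we obtain $I=\bA(\bbf)^\perp$, as desired.

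The only step calling for genuine care is the reproducing identity \eqref{reproducing}: one must keep the two roles of the coordinates separate --- the $\bar x_k$ live in the commutative algebra $M(\bbf)$, commute with each other and with $\ell$, and are inert under the differentiations, which act solely on $y_1,\dots,y_n$ --- so that $h(\partial/\partial y_1,\dots,\partial/\partial y_n)$ is simultaneously $M(\bbf)$-linear and converts $\ell^{\,n(d-1)}$ into a scalar multiple (with coefficient $\bar h$) of a lower power of $\ell$. Everything else is either bookkeeping or a direct appeal to structural facts already established. Conceptually, the point is that one never has to compute $\bA(\bbf)^\perp$ directly: the Gorenstein structure of $M(\bbf)$ collapses the problem to the one-line observation that each generator $f_i$ maps to $0$ in $M(\bbf)$ and hence annihilates $\bA(\bbf)$.
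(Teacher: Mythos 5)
Your proof is correct. Note, however, that the paper itself does not prove Proposition \ref{P:inverse-system} at all: it simply cites \cite[Proposition 2.11]{AI2}, so the comparison here is with an external reference rather than with an internal argument. Your write-up is a complete, self-contained version of the standard apolarity computation that such a proof requires: the derivation identity $h(\partial/\partial y_1,\dots,\partial/\partial y_n)\,\ell^{\,n(d-1)}=\frac{(n(d-1))!}{(n(d-1)-j)!}\,\bar h\,\ell^{\,n(d-1)-j}$ in $M(\bbf)\otimes_\CC\CC[y_1,\dots,y_n]$ immediately gives $(f_1,\dots,f_n)\subseteq\bA(\bbf)^{\perp}$ (each generator maps to $0$ in $M(\bbf)$, and $\bar{\jac(\bbf)}\neq 0$ lets you cancel it coefficientwise), and you correctly reduce the reverse inclusion to the strengthened uniqueness of Remark \ref{invsysrem}, using Lemma \ref{fourconds} for the Gorenstein property and socle degree; the preliminary check that $\bA(\bbf)\neq 0$ is exactly what is needed for that remark to apply. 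Two cosmetic points only: the numerator in your identity should be bracketed as $(n(d-1))!$, and it is worth saying explicitly that $I\subseteq\bA(\bbf)^{\perp}$ follows from the homogeneous case because $I$ is a homogeneous ideal and $h\mapsto h\diamond\bA(\bbf)$ is linear. Your argument also visibly uses only that the relevant multinomial coefficients are nonzero, so it is consistent with the paper's remark that everything goes through over any algebraically closed field of characteristic $0$ or characteristic greater than $n(d-1)$.
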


\noindent By Proposition \ref{P:inverse-system}, the morphism $\bA$ can be thought of as a map assigning to every element $\bbf \in (\CC[x_1, \ldots, x_n]_{d}^{\oplus n})_{\Res}$ a particular Macaulay inverse system for the algebra $M(\bbf)$.

We now let $U_{\Res} \subset \CC[y_1, \ldots, y_n]_{n(d-1)}$ be the locus of forms $F$ such that the subspace $F^{\perp} \cap \CC[x_1, \ldots, x_n]_d$ is $n$-dimensional and has a basis with nonvanishing resultant. It is easy to see that $U_{\Res}$ is locally closed in $\CC[y_1, \ldots, y_n]_{n(d-1)}$, hence is a variety (see, e.g., Proposition \ref{onto1} below for details). By Proposition \ref{P:inverse-system}, the image of $\bA$ is contained in $U_{\Res}$. Moreover, if $F \in U_{\Res}$, then for the ideal $I\subset \CC[x_1, \ldots, x_n]$ generated by $F^{\perp} \cap \CC[x_1, \ldots, x_n]_{d}$, we have the inclusion $I \subset F^{\perp}$. By Remark \ref{invsysrem}, the form $F$ is the inverse system for $\CC[x_1, \ldots, x_n]/I$, and therefore $F=\bA(\bbf)$ for some basis $\bbf=(f_1,\dots,f_n)$ of $F^{\perp} \cap \CC[x_1, \ldots, x_n]_{d}$. Thus, we have proved:

\begin{prop} \label{P:image} \it $\im(\bA)=U_{\Res}$. 
\end{prop}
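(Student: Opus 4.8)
The plan is to prove the equality by establishing the two inclusions $\im(\bA)\subseteq U_{\Res}$ and $U_{\Res}\subseteq\im(\bA)$ separately. Both are short, and their essential content is already assembled in the discussion immediately preceding the statement; the only inputs I would use are Proposition \ref{P:inverse-system}, the strengthened uniqueness assertion of Remark \ref{invsysrem}, and Lemma \ref{fourconds}. The one place that needs a little care is the passage, in the second inclusion, from ``$F$ is proportional to $\bA(\bbf)$ for some $\bbf$'' to ``$F=\bA(\bbf)$ for some $\bbf$''; this is handled by rescaling the tuple and uses that the ground field contains $n$-th roots.

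To prove $\im(\bA)\subseteq U_{\Res}$, I would fix $\bbf=(f_1,\dots,f_n)\in(\CC[x_1,\dots,x_n]_d^{\oplus n})_{\Res}$ and set $F:=\bA(\bbf)$. By Proposition \ref{P:inverse-system} and the definition of a Macaulay inverse system recalled in Remark \ref{invsysrem}, the annihilator $F^{\perp}$ coincides with the ideal $(f_1,\dots,f_n)$. Since $f_1,\dots,f_n$ all have degree $d$, for purely graded reasons the degree-$d$ component of this ideal equals $\Span(f_1,\dots,f_n)$; moreover this span is $n$-dimensional because, by Lemma \ref{fourconds}, an $n$-tuple with nonvanishing resultant is a homogeneous system of parameters of $\CC[x_1,\dots,x_n]$ and hence consists of linearly independent forms. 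Thus $F^{\perp}\cap\CC[x_1,\dots,x_n]_d=\Span(f_1,\dots,f_n)$ is $n$-dimensional with basis $\bbf$, and $\Res(\bbf)\neq0$, so $F\in U_{\Res}$.

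For the reverse inclusion I would take $F\in U_{\Res}$, put $W:=F^{\perp}\cap\CC[x_1,\dots,x_n]_d$ (so $\dim W=n$ by hypothesis), choose a basis $\bbf=(f_1,\dots,f_n)$ of $W$ with $\Res(\bbf)\neq0$, and let $I:=(f_1,\dots,f_n)$ be the ideal generated by $W$. By Lemma \ref{fourconds} the algebra $M(\bbf)=\CC[x_1,\dots,x_n]/I$ is a local Artinian complete intersection, hence Gorenstein, of socle degree $n(d-1)$. Since $W\subseteq F^{\perp}$ and $F^{\perp}$ is an ideal, $I\subseteq F^{\perp}$, so the strengthened uniqueness in Remark \ref{invsysrem} forces $I=F^{\perp}$; that is, $F$ is a Macaulay inverse system for $M(\bbf)$, as is $\bA(\bbf)$ by Proposition \ref{P:inverse-system}, whence $F$ and $\bA(\bbf)$ are proportional. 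To hit $F$ exactly, I would expand the defining identity of $\bA$ and use $\jac(\lambda\bbf)=\lambda^{n}\jac(\bbf)$ to get $\bA(\lambda\bbf)=\lambda^{-n}\bA(\bbf)$ for $\lambda\in\CC^{*}$; since $\lambda\mapsto\lambda^{-n}$ is surjective onto $\CC^{*}$ and each $\lambda\bbf$ is again a basis of $W$ with nonzero resultant, the whole punctured line $\CC^{*}\cdot\bA(\bbf)$ lies in $\im(\bA)$, and in particular so does $F$.

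I expect no genuine obstacle in this argument: the two graded facts it relies on --- that the degree-$d$ component of the ideal generated by $W$ is $W$ itself, and that $I\subseteq F^{\perp}$ already forces $I=F^{\perp}$ --- are elementary, the latter being precisely the content of Remark \ref{invsysrem}, and the only delicate bookkeeping is the rescaling of $\bbf$ needed to land on $F$ itself rather than merely on the line through $\bA(\bbf)$. I would also remark that the fact that $U_{\Res}$ is locally closed (so that the asserted equality holds at the level of varieties, not just of sets) is not needed for this proof and is addressed separately; see Proposition \ref{onto1}.
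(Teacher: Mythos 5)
Your proposal is correct and follows essentially the same route as the paper: the inclusion $\im(\bA)\subseteq U_{\Res}$ via Proposition \ref{P:inverse-system}, and the reverse inclusion by noting that the ideal $I$ generated by $F^{\perp}\cap\CC[x_1,\dots,x_n]_d$ satisfies $I\subseteq F^{\perp}$, so Remark \ref{invsysrem} makes $F$ an inverse system for $M(\bbf)$ and hence proportional to $\bA(\bbf)$. Your explicit rescaling $\bA(\lambda\bbf)=\lambda^{-n}\bA(\bbf)$ is just a spelled-out version of the paper's implicit ``$F=\bA(\bbf)$ for some basis'' (one could equally rescale a single basis vector, avoiding roots), so there is no substantive difference.
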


The constructions of the morphism $\bA$ can be projectivized. Indeed, denote by $\Gr(n, \CC[x_1, \ldots, x_n]_{d})$ the Grassmannian of $n$-dimensional subspaces of the space $\CC[x_1, \ldots, x_n]_{d}$. The resultant $\Res$ on $\CC[x_1, \ldots, x_n]_{d}^{\oplus n}$ descends to a section, also denoted by $\Res$, of a power of the very ample generator of the Picard group of $\Gr(n, \CC[x_1, \ldots, x_n]_{d})$.  Let $\Gr(n, \CC[x_1, \ldots, x_n]_{d})_{\Res}$ be the affine open subvariety where $\Res$ does not vanish; it consists of all $n$-dimensional subspaces of $\CC[x_1, \ldots, x_n]_{d}$ having a basis with nonzero resultant. Consider the morphism 
$$
(\CC[x_1, \ldots, x_n]_{d}^{\oplus n})_{\Res} \to \Gr(n, \CC[x_1, \ldots, x_n]_{d})_{\Res}, \quad  \bbf = (f_1, \ldots, f_n) \mapsto \langle f_1, \ldots, f_n \rangle,
$$
where $\langle\,\cdot\,\rangle$ denotes linear span. Then, by the equivariance property (see \cite[Lemma 2.7]{AI2}), the morphism $\bA$ composed with the projection $\CC[y_1, \ldots, y_n]_{n(d-1)}\setminus\{0\}\to\PP(\CC[y_1, \ldots, y_n]_{n(d-1)})$ factors as
$$
(\CC[x_1, \ldots, x_n]_{d}^{\oplus n})_{\Res} \to \Gr(n, \CC[x_1, \ldots, x_n]_{d})_{\Res} \xrightarrow{\hat\bA} \PP(\CC[y_1, \ldots, y_n]_{n(d-1)}).
$$
By Proposition \ref{P:inverse-system}, the morphism $\hat\bA$ can be thought of as a map assigning to every subspace $W\in\Gr(n, \CC[x_1, \ldots, x_n]_{d})_{\Res}$ {\it the}\, Macaulay inverse system for the algebra $M(\bbf)$, where $\bbf=(f_1,\dots,f_n)$ is any basis of $W$.

Proposition \ref{P:image} implies

\begin{prop} \label{P:imagehat} \it $\im(\hat\bA)=\PP(U_{\Res})$, where $\PP(U_{\Res})$ is the image of $U_{\Res}$ in the projective space $\PP(\CC[y_1, \ldots, y_n]_{n(d-1)})$.
\end{prop}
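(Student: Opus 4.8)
The plan is to deduce the claim formally from Proposition~\ref{P:image} together with the factorization set up immediately above. Write $\pi\co\CC[y_1, \ldots, y_n]_{n(d-1)}\setminus\{0\}\to\PP(\CC[y_1, \ldots, y_n]_{n(d-1)})$ for the projectivization map and
$$
\rho\co(\CC[x_1, \ldots, x_n]_{d}^{\oplus n})_{\Res} \to \Gr(n, \CC[x_1, \ldots, x_n]_{d})_{\Res}, \quad \bbf=(f_1,\dots,f_n)\mapsto\langle f_1, \ldots, f_n \rangle,
$$
for the span morphism. By the construction of $\hat\bA$ we have the commutativity $\hat\bA\circ\rho=\pi\circ\bA$. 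The first (and essentially only) point to check is that $\rho$ is surjective: this is immediate from the definition of $\Gr(n, \CC[x_1, \ldots, x_n]_{d})_{\Res}$, since every subspace $W$ in this open set has, by definition, a basis $\bbf$ with $\Res(\bbf)\ne0$, and then $\rho(\bbf)=W$.

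Given surjectivity of $\rho$, the diagram chase runs
$$
\im(\hat\bA)=\hat\bA\bigl(\rho\bigl((\CC[x_1, \ldots, x_n]_{d}^{\oplus n})_{\Res}\bigr)\bigr)=\pi\bigl(\bA\bigl((\CC[x_1, \ldots, x_n]_{d}^{\oplus n})_{\Res}\bigr)\bigr)=\pi(\im(\bA))=\pi(U_{\Res}),
$$
where the last equality is Proposition~\ref{P:image}, and $\pi(U_{\Res})=\PP(U_{\Res})$ by the very definition of $\PP(U_{\Res})$ in the statement. For this composition of maps to be legitimate one only needs that $U_{\Res}$ avoids $0$, so that $\pi$ is defined on it: this holds because any $F\in U_{\Res}=\im(\bA)$ is a Macaulay inverse system for an algebra of socle degree $n(d-1)$ and is therefore nonzero (equivalently, $(0)^{\perp}\cap\CC[x_1, \ldots, x_n]_d=\CC[x_1, \ldots, x_n]_d$ has dimension $\binom{n+d-1}{d}>n$ for $n,d\ge2$, so $0\notin U_{\Res}$).

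I do not expect any genuine obstacle here: once Proposition~\ref{P:image} is in hand, Proposition~\ref{P:imagehat} is a one-line consequence, and the only bookkeeping required is the surjectivity of the span morphism onto $\Gr(n, \CC[x_1, \ldots, x_n]_{d})_{\Res}$ and the remark that $\pi$ is defined on $U_{\Res}$. No auxiliary lemma is needed.
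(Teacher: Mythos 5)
Your argument is correct and is essentially the paper's own: the paper derives Proposition \ref{P:imagehat} directly from Proposition \ref{P:image} via the factorization $\pi\circ\bA=\hat\bA\circ\rho$ set up just before the statement, and you have simply made the routine bookkeeping (surjectivity of the span morphism onto $\Gr(n, \CC[x_1, \ldots, x_n]_{d})_{\Res}$ and $0\notin U_{\Res}$) explicit. No discrepancy with the paper's approach.
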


\noindent It turns out that $\hat\bA:  \Gr(n, \CC[x_1, \ldots, x_n]_{d})_{\Res}\to \PP(U_{\Res})$ is in fact an isomorphism (see \cite[Proposition 2.13]{AI2}). This last result will be utilized in our considerations of the relevant catalecticant varieties in the next section.

\section{The catalecticant varieties and a description of $\im(\bA)$}\label{schemes}
\setcounter{equation}{0}

Let
$
K:=\dim_\CC \CC[x_1,\dots,x_n]_d={d+n-1 \choose n-1}.
$
Consider the quasiaffine variety
$
U:=U_{K-n}(n(d-1)-d, d;n)\subset \CC[y_1,\dots,y_n]_{n(d-1)}
$
and the affine subvariety
$
V:=V_{K-n}(n(d-1)-d, d;n)\subset \CC[y_1,\dots,y_n]_{n(d-1)}
$
as defined in \cite[p.~5]{IK}. Specifically, set
$
L:=\dim_{\CC} \CC[y_1,\dots,y_n]_{n(d-1)-d}={n(d-1)-d+n-1 \choose n-1}
$
and let $\{{\mathtt m}_1,\dots,{\mathtt m}_K\}$, $\{{\mathbf m}_1,\dots,{\mathbf m}_L\}$ be the standard monomial bases in the spaces $\CC[x_1,\dots,x_n]_d$ and $\CC[y_1,\dots,y_n]_{n(d-1)-d}$, respectively, with the monomials numbered in accordance with some orders, which we will fix from now on. For a form $F\in \CC[y_1,\dots,y_n]_{n(d-1)}$ let $F_j:={\mathtt m}_j\diamond F\in \CC[y_1,\dots,y_n]_{n(d-1)-d}$, $j=1,\dots,K$, where $\diamond$ is defined in (\ref{pp}). Expanding $F_1,\dots,F_K$ with respect to $\{{\mathbf m}_1,\dots,{\mathbf m}_L\}$, we obtain an $L\times K$-matrix $D(F)$ called the {\it catalecticant matrix}. Then the varieties $U$ and $V$ are described as
$$
\begin{array}{l}
U=\{F \in \CC[y_1,\dots,y_n]_{n(d-1)}\mid \rank D(F)=K-n\},\\
\vspace{-0.3cm}\\
V=\{F\in \CC[y_1,\dots,y_n]_{n(d-1)}\mid \rank D(F)\le K-n\}.
\end{array}
$$
Note that $U$ is a dense open subset of $V$ (see \cite[Lemma 3.5]{IK}).

Clearly, $V\subset \CC[y_1,\dots,y_n]_{n(d-1)}$ is the affine subvariety given by the condition of the vanishing of all $(K-n+1)$-minors of $D(F)$. Observe that for $n=2$ one has $K=d+1$, $L=d-1$, and therefore the matrix $D(F)$ has no $(K-1)$-minors, hence $V=\CC[y_1,y_2]_{2(d-1)}$. Similarly, for $n=3$, $d=2$, we have $K=6$, $L=3$, therefore $D(F)$ has no $(K-2)$-minors, hence $V=\CC[y_1,y_2,y_3]_3$. Notice that in all other cases one has $L\ge K$, and therefore $V$ is a proper affine subvariety of $\CC[y_1,\dots,y_n]_{n(d-1)}$ unless $n=2$ or $n=3$, $d=2$.

Next, let $T:=(t_0,t_1,\dots,t_{n(d-1)})=(1,n,\dots,n,1)$ be the Gorenstein sequence from the Hilbert function (\ref{hf}), which is symmetric about $n(d-1)/2$. Consider the quasiaffine variety $\Gor(T)$ that consists of all forms $F\in \CC[y_1,\dots,y_n]_{n(d-1)}$ such that the Hilbert function of the standard graded local Artinian Gorenstein algebra $\CC[x_1,\dots,x_n]/F^{\perp}$ is $T$. Clearly, $\Gor(T)$ is an open subset of the affine subvariety $\Gor_{\le}(T)\subset \CC[y_1,\dots,y_n]_{n(d-1)}$ consisting of all forms $F$ for which the Gorenstein sequence of $\CC[x_1,\dots,x_n]/F^{\perp}$ does not exceed $T$. Analogously to $V$, the variety $\Gor_{\le}(T)$ is defined by the vanishing of all $(t_i+1)$-minors of the corresponding matrices constructed analogously to $D(F)$, for $i=1,\dots,n(d-1)-1$. Following \cite{IK}, we call $V$ and $\Gor(T)$ the {\it catalecticant varieties}. 

\begin{remark}\label{moregeneralcatal}
We note that \cite{IK} introduces more general catalecticant varieties (and even schemes), but $V$ and $\Gor(T)$ are the ones most relevant to our study of the morphism $\bA$, thus in the present paper only these two varieties are considered.
\end{remark}

We have the obvious inclusions
\begin{equation}
U_{\Res}\subset\Gor(T)\subset U\subset V,\label{setincl}
\end{equation}
where $U_{\Res}\subset \CC[y_1,\dots,y_n]_{n(d-1)}$ was defined in Section \ref{morphism}. To better understand the relationship between $U_{\Res}$, $\Gor(T)$, $U$ and $V$, we will now introduce a certain closed subset of $U$. 

Cover $U$ by open subsets ${\rm U}_{\alpha}$, each of which is given by the condition of the nonvanishing of a particular $(K-n)$-minor of the catalecticant matrix $D(F)$. In what follows, on each ${\rm U}_{\alpha}$ we will define a regular function $R_{\alpha}$. Let, for instance, ${\rm U}_{\alpha_{{}_0}}$ be the subset of $U$ described by the nonvanishing of the principal $(K-n)$-minor of $D(F)$. For $F\in {\rm U}_{\alpha_{{}_0}}$ we will now find a canonical basis of the solution set ${\mathcal S}(F)$ of the homogeneous system
$D(F)\gamma=0$,
where $\gamma$ is a column-vector in $\CC^K$. Since $\rank D(F)=K-n$, one has $\dim_{\CC}{\mathcal S}(F)=n$. Split $D(F)$ into blocks as follows:
$$
D(F)=\left(
\begin{array}{c}
\boxed{A(F)} \quad \boxed{B(F)}\\
\vspace{-0.3cm}\\
\boxed{\hspace{0.7cm}C(F)\hspace{0.7cm}}
\end{array}
\right),
$$
where $A(F)$ has size $(K-n)\times(K-n)$ (recall that $\det A(F)\ne 0$), $B(F)$ has size $(K-n)\times n$, and $C(F)$ has size $(L-K+n)\times K$. We also split the column-vector $\gamma$ as $\gamma=\left(\begin{array}{c}\gamma'\\\gamma''\end{array}\right)$, where $\gamma$ is in $\CC^{K-n}$ and $\gamma''$ is in $\CC^n$. Then ${\mathcal S}(F)$ is given by the  condition
$
\gamma'=-A(F)^{-1}B(F)\gamma''.
$
Therefore, the vectors
$$
\gamma_j(F):=\left(\begin{array}{c}-A(F)^{-1}B(F){\mathbf e}_j\\{\mathbf e}_j\end{array}\right),\quad j=1,\dots,n,
$$
form a basis of ${\mathcal S}(F)$ for every $F\in {\rm U}_{\alpha_{{}_0}}$, where ${\mathbf e}_j$ is the $j$th standard basis vector in $\CC^n$. 

Clearly, the components $\gamma_j^1,\dots,\gamma_j^K$ of $\gamma_j$ are regular functions on ${\rm U}_{\alpha_{{}_0}}$ for each $j$, and we define
$r_{j,\alpha_{{}_0}}:=\sum_{i=1}^K \gamma_j^i\,{\mathtt m}_i$, $j=1,\dots,n$,
where, as before, $\{{\mathtt m}_1,\dots,{\mathtt m}_K\}$ is the standard monomial basis in $\CC[x_1,\dots,x_n]_d$. Then the $d$-forms $r_{1,{\alpha_{{}_0}}}(F),\dots,r_{n,{\alpha_{{}_0}}}(F)$ constitute a basis of the intersection $F^{\perp}\cap \CC[x_1,\dots,x_n]_d$ for every $F\in {\rm U}_{\alpha_{{}_0}}$.  Set
$
R_{\alpha_{{}_0}}:=\Res(r_{1,\alpha_{{}_0}},\dots,r_{n,\alpha_{{}_0}}).
$
Clearly, $R_{\alpha_{{}_0}}$ is a regular function on ${\rm U}_{\alpha_{{}_0}}$, and we define $Z_{\alpha_{{}_0}}$ to be its zero locus.

Arguing as above for every ${\rm U}_{\alpha}$, we introduce a regular function $R_{\alpha}$ on ${\rm U}_{\alpha}$ and its zero locus $Z_{\alpha}$. Notice that if for some $\alpha$, $\alpha'$ the intersection ${\rm U}_{\alpha,\alpha'}:={\rm U}_{\alpha}\cap {\rm U}_{\alpha'}$ is nonempty, then $Z_{\alpha}\cap {\rm U}_{\alpha,\alpha'} =Z_{\alpha'}\cap {\rm U}_{\alpha,\alpha'}$. Thus, the loci $Z_{\alpha}$ glue together into a closed subset $Z$ of $U$. If $U'$ is an irreducible component of $U$, then the intersection $Z\cap U'$ is either a hypersurface in $U'$, or all of $U'$, or empty. Notice also that $Z$ is $\GL_n$-invariant, which follows from the general formula
$$
(CF)^{\perp}\cap \CC[x_1,\dots,x_n]_j=C^{-t}\,(F^{\perp}\cap \CC[x_1,\dots,x_n]_j),\quad j=0,\dots,n(d-1),
$$
for all $C\in\GL_n$, $F\in \CC[y_1,\dots,y_n]_{n(d-1)}$.

We will now establish:

\begin{proposition}\label{onto1} One has $U_{\Res}=\Gor(T)\setminus Z=U\setminus Z=V\setminus\overline{Z}$.
\end{proposition}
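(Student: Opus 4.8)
The plan is to split the statement into formal set-theoretic bookkeeping and one genuinely geometric inclusion, and to attack the latter through the isomorphism $\hat\bA$.

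The equality $U_{\Res}=U\setminus Z$ is immediate from the construction of $Z$: on the chart $\mathrm{U}_\alpha$ the forms $r_{1,\alpha}(F),\dots,r_{n,\alpha}(F)$ are a basis of $F^\perp\cap\CC[x_1,\dots,x_n]_d$, so $R_\alpha(F)=0$ exactly when this $n$-dimensional space has no basis of nonvanishing resultant, and since the resultant of a basis is well defined up to a nonzero scalar this condition does not depend on the chart; thus $U_{\Res}\cap\mathrm{U}_\alpha=\mathrm{U}_\alpha\setminus Z_\alpha$, these glue to $U_{\Res}=U\setminus Z$, and in particular $U_{\Res}\cap Z=\varnothing$. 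Combining with the inclusions $U_{\Res}\subset\Gor(T)\subset U$ from \eqref{setincl} gives $\Gor(T)\setminus Z=U\setminus Z=U_{\Res}$. Since $U$ is dense in $V$ and $U=U_{\Res}\sqcup Z$, taking closures yields $V=\overline{U_{\Res}}\cup\overline Z$, where $\overline{U_{\Res}}$ is irreducible, being the closure of the image of the irreducible variety $(\CC[x_1,\dots,x_n]_d^{\oplus n})_{\Res}$ under $\bA$; and $\overline Z\cap U=Z$ because $Z$ is closed in $U$, so that $U\setminus Z\subset V\setminus\overline Z$. It therefore suffices to prove the single inclusion $\overline{U_{\Res}}\setminus U_{\Res}\subset\overline Z$, because then $V\setminus\overline Z=\overline{U_{\Res}}\setminus\overline Z\subset\overline{U_{\Res}}\setminus(\overline{U_{\Res}}\setminus U_{\Res})=U_{\Res}\subset V\setminus\overline Z$.

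So fix $F^*\in\overline{U_{\Res}}\setminus U_{\Res}$. If $F^*\in U$ then $F^*\in U\setminus U_{\Res}=Z$ and we are done, so assume $F^*\in V\setminus U$. Form the closure $\Gamma$ in $\Gr(n,\CC[x_1,\dots,x_n]_d)\times\PP(\CC[y_1,\dots,y_n]_{n(d-1)})$ of the graph of the isomorphism $\hat\bA\colon\Gr(n,\CC[x_1,\dots,x_n]_d)_{\Res}\to\PP(U_{\Res})$; both projections of $\Gamma$ are proper, the first being birational onto $\Gr(n,\CC[x_1,\dots,x_n]_d)$ and the second birational onto $\overline{\PP(U_{\Res})}$. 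Pick $(W_0,[F^*])$ in the (nonempty) fibre of the second projection over $[F^*]$. The defining condition of $\Gamma$ gives $W_0\subset(F^*)^\perp$, and $W_0$ is an $n$-dimensional subspace of $\CC[x_1,\dots,x_n]_d$; moreover $\Res(W_0)=0$, for otherwise $W_0$ would lie in $\Gr(n,\CC[x_1,\dots,x_n]_d)_{\Res}$, so $(W_0,[F^*])$ would lie in the graph of $\hat\bA$ (which is closed there), forcing $[F^*]=\hat\bA(W_0)\in\PP(U_{\Res})$ and hence $F^*\in U_{\Res}$ (as $U_{\Res}$ is a cone), a contradiction. (Alternatively, such a $W_0$ can be produced by extending, via the valuative criterion for the proper variety $\Gr(n,\CC[x_1,\dots,x_n]_d)$, a curve in $\Gr(n,\CC[x_1,\dots,x_n]_d)_{\Res}$ lying over a curve through $F^*$ in $\overline{U_{\Res}}$.)

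Consider now the linear subspace $W_0^{\diamond}:=\{F\in\CC[y_1,\dots,y_n]_{n(d-1)}\mid g\diamond F=0\ \text{for all }g\in W_0\}$; it contains $F^*$. Any $F\in W_0^{\diamond}$ satisfies $W_0\subset F^\perp\cap\CC[x_1,\dots,x_n]_d$, so if $F$ also lies in $U$ then $F^\perp\cap\CC[x_1,\dots,x_n]_d=W_0$, and since $\Res(W_0)=0$ this gives $F\in Z$; hence $W_0^{\diamond}\cap U=W_0^{\diamond}\cap Z$. On $W_0^{\diamond}$ one has $\rank D(F)\le K-n$ identically, so $W_0^{\diamond}\cap U$ is the nonvanishing locus of some $(K-n)$-minor of $D(F)$, hence open in the irreducible space $W_0^{\diamond}$; consequently, once we know $W_0^{\diamond}\cap U\neq\varnothing$, we get $W_0^{\diamond}=\overline{W_0^{\diamond}\cap Z}\subset\overline Z$, and in particular $F^*\in\overline Z$, completing the proof. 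The main obstacle is precisely the nonemptiness of $W_0^{\diamond}\cap U$: one must exhibit a form of degree $n(d-1)$ whose degree-$d$ annihilator is exactly $W_0$, equivalently, show that the generic Gorenstein quotient of $\CC[x_1,\dots,x_n]/(W_0)$ of socle degree $n(d-1)$ acquires no degree-$d$ relations beyond $W_0$. Here I would feed into the inverse-system/catalecticant machinery for Gorenstein quotients of \cite[Chapter 4]{IK}, applied to $\CC[x_1,\dots,x_n]/(W_0)$ — which has positive Krull dimension since $\Res(W_0)=0$ — the numerical facts that $\dim_\CC(\CC[x_1,\dots,x_n]/(W_0))_d=K-n$ and, by upper semicontinuity together with the symmetry $t_d=t_{n(d-1)-d}$ of the Hilbert function \eqref{hf} (where $t_d=K-n$ by \eqref{setincl}, as $\Gor(T)\subset U$) realised by the complete intersections degenerating to $(W_0)$, also $\dim_\CC(\CC[x_1,\dots,x_n]/(W_0))_{n(d-1)-d}\ge K-n$; these two inputs should force $\min(t_d,t_{n(d-1)-d})=K-n$ to be attained, i.e.\ no drop in degree $d$. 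Verifying the precise hypotheses of that machinery for the $W_0$ at hand — in particular the Cohen--Macaulay-type condition, which is not automatic for a flat limit of complete intersections — is where the real work lies.
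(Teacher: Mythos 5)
Your first half is fine and coincides with the paper's: $U_{\Res}=U\setminus Z$ holds chart by chart by the construction of $Z$, the inclusions (\ref{setincl}) then give $U_{\Res}=\Gor(T)\setminus Z=U\setminus Z$, and since $Z$ is closed in $U$ and $U$ is dense in $V$, everything reduces to one inclusion ($V\setminus U\subset\overline Z$ in the paper; your variant $\overline{U_{\Res}}\setminus U_{\Res}\subset\overline Z$ is logically equivalent for the purpose at hand). The problem is that your argument for that inclusion is not complete, and you say so yourself: after producing a limit subspace $W_0\subset (F^*)^\perp$ with $\Res(W_0)=0$ from the closure of the graph of $\hat\bA$, everything hinges on $W_0^{\diamond}\cap U\neq\varnothing$, i.e.\ on the existence of a form $F$ of degree $n(d-1)$ with $F^{\perp}\cap\CC[x_1,\dots,x_n]_d$ equal to $W_0$ \emph{exactly}. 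This is the genuinely hard point and it is left to unspecified ``machinery''. Note that it is not a formal consequence of the numerical inputs you list: since $\CC[x_1,\dots,x_n]/(W_0)$ has positive Krull dimension, semicontinuity along the degeneration gives $\dim_\CC(\CC[x_1,\dots,x_n]/(W_0))_{n(d-1)-d}\ge K-n$, but the rank of the pairing $(a,b)\mapsto\phi(\overline{ab})$ on $A_d\times A_{n(d-1)-d}$ for a generic functional $\phi$ (equivalently, the rank of $D(F)$ for generic $F\in W_0^{\diamond}$) need not attain $\min(\dim A_d,\dim A_{n(d-1)-d})$; ruling out that \emph{every} $F\in W_0^{\diamond}$ acquires extra degree-$d$ apolar forms is precisely a statement of the kind that requires a dedicated argument. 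So the decisive step of the proposition is missing.

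For comparison, the paper closes exactly this hole by citing the proof of \cite[Lemma 3.5]{IK}: in every neighborhood of every $F\in V\setminus U$ there is $\hat F\in U$ such that all elements of $\hat F^{\perp}\cap\CC[x_1,\dots,x_n]_d$ have a common zero away from the origin, i.e.\ $\hat F\in Z$; hence $V\setminus U\subset\overline Z$ at once, with no need to fix a limit subspace $W_0$ or to control the annihilator scheme of a non-Artinian quotient. Unless you either import that lemma (or an equivalent perturbation statement) or actually prove the nonemptiness $W_0^{\diamond}\cap U\neq\varnothing$ for a suitable $W_0$, the proof is incomplete. A minor additional point: your projectivized graph argument breaks down at $F^*=0$ (where $[F^*]$ is undefined), so that case needs a separate remark, e.g.\ via the scaling invariance of $Z$ once nonzero boundary points are handled.
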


\begin{proof} It is clear that $U_{\Res}=U\setminus Z$, thus inclusions (\ref{setincl}) imply $U_{\Res}=\Gor(T)\setminus Z=U\setminus Z$. Further, to see that $U\setminus Z=V\setminus\overline{Z}$, we need to prove that $V\setminus U\subset\overline{Z}$. As shown in the proof of \cite[Lemma 3.5]{IK}, in every neighborhood of every form $F\in V\setminus U$ there exists $\hat F\in U$ such that all elements of $\hat F^{\perp} \cap \CC[x_1,\dots,x_n]_d $ have a common zero away from the origin. Thus, $F\in\overline{Z}$ as required. \end{proof}

Next, by Proposition \ref{P:imagehat}, the morphism $\hat\bA: \Gr(n, \CC[x_1, \ldots, x_n]_{d})_{\Res}\to \PP(U_{\Res})$ is surjective. In fact, by \cite[Proposition 2.13]{AI2}, the map $\hat\bA$ is an isomorphism, therefore we have
$$
\dim_\CC \PP(U_{\Res})=\dim_\CC \Gr(n, \CC[x_1, \ldots, x_n]_{d})=Kn-n^2,
$$
which implies
\begin{equation}
\dim_\CC U_{\Res}=Kn-n^2+1=:N.\label{dim1}
\end{equation}
As $U_{\Res}$ is irreducible, we obtain the following result:

\begin{theorem}\label{main8} There exist irreducible components $\Gor(T)^{\circ}$, $U^{\circ}$, $V^{\circ}$ of the varieties $\Gor(T)$, $U$, $V$, respectively, such that
$
U_{\Res}=\Gor(T)^{\circ}\setminus Z=U^{\circ}\setminus Z=V^{\circ}\setminus\overline{Z},
$
with
$
\dim_\CC\Gor(T)^{\circ}=\dim_\CC U^{\circ}=\dim_\CC V^{\circ}=N,
$
where $N$ is defined in {\rm (\ref{dim1})}.
\end{theorem}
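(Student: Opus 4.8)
The plan is to deduce the theorem formally from Proposition~\ref{onto1}, the dimension formula~(\ref{dim1}), and one elementary observation about Noetherian spaces: if $W$ is a nonempty irreducible open subset of a variety $X$, then the closure $\overline{W}$ of $W$ in $X$ is an irreducible component of $X$, and $W$ is dense in it. To see this, write $X=X_1\cup\dots\cup X_m$ as the union of its irreducible components; then $W=\bigcup_i(W\cap X_i)$ is a finite union of subsets each closed in $W$, so by irreducibility of $W$ one has $W\subseteq X_i$ for some $i$; since $W$ is open in $X$ and contained in $X_i$, it is a nonempty open, hence dense, subset of the irreducible space $X_i$, whence $\overline{W}=X_i$.

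First I would record that $U_{\Res}$ is a nonempty irreducible open subset of each of the three varieties $\Gor(T)$, $U$ and $V$. Irreducibility and nonemptiness are already noted in the text ($U_{\Res}=\im(\bA)$ by Proposition~\ref{P:image}, and the domain $(\CC[x_1,\dots,x_n]_d^{\oplus n})_{\Res}$ of $\bA$ is a nonempty principal open subset of an affine space, hence irreducible). For openness: $U_{\Res}=U\setminus Z$ with $Z$ closed in $U$ by construction; $U_{\Res}=\Gor(T)\setminus Z$ by Proposition~\ref{onto1}, and $Z\cap\Gor(T)$ is closed in $\Gor(T)$ since $\Gor(T)\subseteq U$; and $U_{\Res}=V\setminus\overline{Z}$ by Proposition~\ref{onto1}, with $\overline{Z}$ closed.

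Then I would set $U^{\circ}:=\overline{U_{\Res}}$, $\Gor(T)^{\circ}:=\overline{U_{\Res}}$ and $V^{\circ}:=\overline{U_{\Res}}$, the closures being taken in $U$, in $\Gor(T)$ and in $V$ respectively. By the observation above each of these is an irreducible component of the corresponding variety, in which $U_{\Res}$ is dense; consequently $\dim_{\CC}\Gor(T)^{\circ}=\dim_{\CC}U^{\circ}=\dim_{\CC}V^{\circ}=\dim_{\CC}U_{\Res}=N$ by~(\ref{dim1}). Finally, since $U_{\Res}$ is contained in each of these components, intersecting the equalities of Proposition~\ref{onto1} with them gives the desired descriptions: for instance $U_{\Res}=U_{\Res}\cap U^{\circ}=(U\setminus Z)\cap U^{\circ}=U^{\circ}\setminus Z$, and likewise $U_{\Res}=\Gor(T)^{\circ}\setminus Z$ and $U_{\Res}=V^{\circ}\setminus\overline{Z}$.

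I do not expect a serious obstacle here: the substantive content has already been absorbed into Proposition~\ref{onto1}, into the isomorphism $\hat\bA$ of \cite[Proposition~2.13]{AI2} underlying~(\ref{dim1}), and into the irreducibility of $U_{\Res}$. The only point deserving a little care is the topological observation above, and in particular the verification that $U_{\Res}$ is genuinely open---not merely locally closed or dense---in each of $\Gor(T)$, $U$ and $V$; this is exactly what Proposition~\ref{onto1} together with the construction of $Z$ as a closed subset of $U$ supplies.
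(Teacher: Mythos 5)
Your proposal is correct and follows essentially the same route as the paper: the paper likewise deduces the theorem from Proposition~\ref{onto1}, the dimension formula~(\ref{dim1}) coming from the isomorphism $\hat\bA$, and the irreducibility of $U_{\Res}$, leaving implicit the routine topological step (closure of an irreducible open subset is an irreducible component) that you spell out. No gaps.
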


As by Proposition \ref{P:image} we have $\im(\bA)=U_{\Res}$, Theorem \ref{P:image} yields a description of the image of the morphism $\bA$ in terms of $\Gor(T)$, $U$, $V$ and $Z$.

\begin{remark}\label{compar1} Theorem 4.17 of \cite{IK} shows that $\Gor(T)$ has an irreducible component containing $U_{\Res}$ as a dense subset and the dimension of this component is equal to $N$. The proof given in \cite{IK} does not explicitly utilize the morphism $\bA$ and is somewhat brief overall. Also, Theorem 4.19 of \cite{IK} (cf.~Corollary 4.18 therein) yields that $U_{\Res}$ is dense in an irreducible component of $V$ in the following cases: (i) $n=3$, $d\ge 3$, (ii) $n=4$, $d=2,3$, (iii) $n=5$, $d=2$. In comparison with these results, Theorem \ref{main8} stated above is more precise because:
\begin{itemize}

\item it treats both $\Gor(T)$ and $V$ simultaneously for all $n,d$;

\item it shows that $U_{\Res}$ is in fact open (not just dense) in an irreducible component of each of $\Gor(T)$ and $V$ and explicitly describes the closed complement to $U_{\Res}$ in terms of the subset $Z$;

\item its proof gives a complete argument for the formula for $\dim_\CC U_{\Res}$. 

\end{itemize} 

\end{remark}

\begin{remark}\label{complement}
Describing the complement to $\im(\bA)=U_{\Res}$ in $V^{\circ}$ is of particular importance for settling Conjecture \ref{C:main}. Theorem \ref{main8} offers a description in terms of the set $Z$, but, ideally, one would like to show that there exists an $\SL_n$-invariant form on $\CC[y_1,\dots,y_n]_{n(d-1)}$ whose zero locus intersects $V^{\circ}$ in $V^{\circ}\setminus \im(\bA)$. This indeed holds for $n=2$, in which case $V^{\circ}=V=\CC[y_1,y_2]_{2(d-1)}$ and $\CC[y_1,y_2]_{2(d-1)}\setminus \im(\bA)$ is the zero locus of the catalecticant (see \cite[Proposition 4.3]{AI2}). The above fact was instrumental for establishing Conjecture \ref{C:main} in the binary case in \cite{AI2}. In the next section we will show that an analogous statement is also valid for $n=3$, $d=2$. Notice that, by \cite{EI}, the conjecture holds in this situation as well.
\end{remark}

\section{The case $n=3$, $d=2$}\label{aronhold}
\setcounter{equation}{0}

In this section we set $n=3$, $d=2$. Notice that the associated form of any element of $(\CC[x_1,x_2,x_3]_2^{\oplus 3})_{\Res}$ is a ternary cubic and that $V^{\circ}=V=\CC[y_1,y_2,y_3]_{3}$. Let $S$ be the degree four Aronhold invariant. An explicit formula for $S$ can be found, for example, in \cite[p.~250]{DK}. Namely, for a ternary cubic
$$
\begin{array}{l}
c(y_1,y_2,y_3)=ay_1^3+by_2^3+cy_3^3+3dy_1^2y_2+3ey_1^2y_3+3fy_1y_2^2+\\
\vspace{-0.1cm}\\
\hspace{6cm}3gy_2^2y_3+3hy_1y_3^2+3iy_2y_3^2+6jy_1y_2y_3
\end{array}
$$
one has
\begin{equation}
\begin{array}{l}
S(c)=abcj-bcde-cafg-abhi-j(agi+bhe+cdf)+\\
\vspace{-0.3cm}\\
\hspace{1.5cm}afi^2+ahg^2+bdh^2+bie^2+cgd^2+cef^2-j^4+\\
\vspace{-0.3cm}\\
\hspace{1.5cm}2j^2(fh+id+eg)-3j(dgh+efi)-f^2h^2-i^2d^2-\\
\vspace{-0.3cm}\\
\hspace{1.5cm}e^2g^2+ideg+egfh+fhid.
\end{array}\label{invs}
\end{equation}

We will now state the result of this section, which for $n=3$, $d=2$ provides a more explicit description of the complement $\CC[y_1,y_2,y_3]_3\setminus\im(\bA)$ than the one given by Theorem \ref{main8}.

\begin{proposition}\label{psind3} One has $\CC[y_1,y_2,y_3]_3\setminus\im(\bA)=\{S=0\}$.
\end{proposition}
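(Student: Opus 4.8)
The plan is to identify the complement $\CC[y_1,y_2,y_3]_3\setminus\im(\bA)$ with the null-cone-type locus $\{S=0\}$ by combining the description from Theorem \ref{main8} with classical facts about plane cubics. By Proposition \ref{P:image} and Theorem \ref{main8}, for $n=3$, $d=2$ we have $\im(\bA)=U_{\Res}=V\setminus\overline Z=\CC[y_1,y_2,y_3]_3\setminus\overline Z$, so the statement is equivalent to proving $\overline Z=\{S=0\}$. The first step is to unwind what $Z$ means here: a ternary cubic $F$ lies in $U_{\Res}$ iff $F^\perp\cap\CC[x_1,x_2,x_3]_2$ is $3$-dimensional and has a basis $(q_1,q_2,q_3)$ of conics with nonvanishing resultant, i.e. with no common zero in $\PP^2$. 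Since $n=3$, $d=2$, one computes $K=6$, $L=3$, so the catalecticant matrix $D(F)$ is $3\times 6$ and always has rank $\le 3=K-n$; hence $U=V=\CC[y_1,y_2,y_3]_3$ and $F\in U$ automatically, while $F\in Z$ precisely when $\dim(F^\perp\cap\CC[x]_2)=3$ (rank exactly $3$) but the three conics in $F^\perp$ have a common point, and $F\in\overline Z\setminus Z$ when the rank drops below $3$ (a limiting degeneracy). So $\overline Z$ is exactly the set of cubics $F$ for which $F^\perp$ contains a net (or larger) of conics with a base point, including degenerations.

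The second step is to connect this to the Aronhold invariant via apolarity and the classical theory. The algebra $\CC[x_1,x_2,x_3]/F^\perp$ is Gorenstein of socle degree $3$, and its Hilbert function is either $(1,3,3,1)$ — the generic case, equivalently $\dim(F^\perp)_2=3$, equivalently $F\in\Gor(T)$ — or something smaller, e.g. $(1,2,2,1)$, $(1,1,1,1)$, occurring exactly when $F$ is a "degenerate" cubic (a cone, or a form in fewer variables after a linear change). It is classical that the locus where the Hilbert function is not $(1,3,3,1)$ is precisely the locus of cubics apolar to a conic, equivalently the cubics annihilated by $S$ in the sense relevant to the $\GL_3$-action; one standard description is that $S(F)=0$ iff $F$ is apolar to some smooth conic, equivalently iff $F$ can be written in the Waring form $\ell_1^3+\ell_2^3+\ell_3^3$ is \emph{impossible}/the generic Waring decomposition degenerates — more precisely $S$ is the unique (up to scalar) $\SL_3$-invariant of degree $4$ on ternary cubics, and its vanishing characterizes the "equianharmonic-degenerate" cubics. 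The key classical input I would cite (from \cite{DK} or the Dolgachev/classical literature on plane cubics) is: a ternary cubic $F$ satisfies $S(F)=0$ if and only if there is a conic $q$ with $q\diamond F=0$, i.e. $(F^\perp)_2\ne 0$, \emph{and} one must then check this is equivalent to the base-point / rank-drop condition defining $\overline Z$.

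The third step, and the main work, is to prove the two inclusions $\overline Z\subset\{S=0\}$ and $\{S=0\}\subset\overline Z$. For $\overline Z\subset\{S=0\}$: on $Z$ itself, $F^\perp$ contains three conics $q_1,q_2,q_3$ with a common zero $p\in\PP^2$; I would argue that after a linear change moving $p$ to $[0:0:1]$, every conic in $(F^\perp)_2$ misses the monomial $x_3^2$ and the pairing forces $F$ to have a special shape making $S(F)=0$ — concretely, the common base point produces a linear form in $F^\perp$, hence $F$ is (projectively) a cubic in two variables, hence a degenerate cubic, hence $S(F)=0$ since $S$ vanishes identically on binary-type cubics (one checks this from formula (\ref{invs})). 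Then pass to the closure: $S=0$ is closed, so $\overline Z\subset\{S=0\}$. For the reverse inclusion $\{S=0\}\subset\overline Z$: take $F$ with $S(F)=0$; by the classical description $F$ is in the closure of the orbit of degenerate cubics, or more directly $(F^\perp)_2$ is "large enough" / positioned so that the three conics (when $\dim=3$) acquire a common point — here I would use that $\{S=0\}$ is irreducible (it is an irreducible hypersurface, being the zero locus of an irreducible invariant — this itself should be recorded, citing irreducibility of $S$) and that it is $\GL_3$-invariant, so it suffices to exhibit one $F$ in a dense subset of $\{S=0\}$ lying in $Z$; the generic point of $\{S=0\}$ is a cubic apolar to a smooth conic, for which one can write $F^\perp$ explicitly and verify the three quadrics share a base point. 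Since $\overline Z$ is closed and $\GL_3$-invariant and contains a dense subset of the irreducible $\{S=0\}$, we get $\{S=0\}\subset\overline Z$. The main obstacle I anticipate is the bookkeeping in this last step: one must be careful that $\overline Z$ — which was defined as the closure \emph{in $U=V$} of the glued loci $Z_\alpha$ — really does pick up \emph{all} of $\{S=0\}$ and not a proper subvariety, i.e. that $\dim Z=\dim\{S=0\}$, so I would cross-check dimensions: $\dim\{S=0\}=\dim\CC[y_1,y_2,y_3]_3-1=10-1=9$, while $\dim\overline Z\le\dim V-1=9$ by Theorem \ref{main8} (the complement of $\im(\bA)$ is a proper closed subset) and $\overline Z\ne\emptyset$, and $\overline Z$ is irreducible iff $\{S=0\}$ is; matching these forces equality. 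I would also double-check the edge behavior at $\overline Z\setminus Z$ (rank-$\le 2$ cubics, the most degenerate strata) against the low-order terms of (\ref{invs}) to be sure nothing in $\{S=0\}$ is missed and nothing extra is included.
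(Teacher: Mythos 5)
Your overall strategy (reduce, via Proposition \ref{onto1} and Theorem \ref{main8}, to showing $\overline{Z}=\{S=0\}$, then prove two inclusions) is reasonable, and it genuinely differs from the paper, which instead classifies ternary cubics into canonical forms $c_{1,t},c_2,\dots,c_9$, computes from formula (\ref{invs}) that $\{S=0\}$ is the union of the orbits of $c_{1,0},c_2,c_4,c_7,c_8,c_9$ together with $0$, and then checks membership in $U_{\Res}=\im(\bA)$ orbit by orbit via explicit computation of $F^{\perp}\cap\CC[x_1,x_2,x_3]_2$. However, your argument for the inclusion corresponding to $\im(\bA)\cap\{S=0\}=\emptyset$ (i.e.\ $Z\subset\{S=0\}$) contains a genuine error. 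You claim that if the conics in $F^{\perp}\cap\CC[x_1,x_2,x_3]_2$ have a common point $p\in\PP^2$, then $F^{\perp}$ contains a linear form, so that $F$ is a cubic in two variables and hence $S(F)=0$. This implication is false: for the Fermat cubic $F=y_1^3+y_2^3+y_3^3$ one has $F^{\perp}\cap\CC[x_1,x_2,x_3]_2=\langle x_1x_2,x_1x_3,x_2x_3\rangle$, which has three common zeros in $\PP^2$, yet $F^{\perp}$ contains no linear form and $F$ is a smooth cubic. The conclusion $S(F)=0$ happens to hold for this $F$, but not for the reason you give; indeed $\{S=0\}=\overline{O(c_{1,0})}$ is the closure of the locus of sums of three cubes of linear forms, whose \emph{generic} member is a smooth (equianharmonic) cubic, not a degenerate one, so any argument reducing $Z$ to ``binary-type'' cubics cannot work. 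Relatedly, your stated ``key classical input'' --- that $S(F)=0$ iff $(F^{\perp})_2\neq 0$ --- is vacuous: since $\dim\CC[x_1,x_2,x_3]_2=6$ and $\dim\CC[y_1,y_2,y_3]_1=3$, every ternary cubic has $\dim (F^{\perp})_2\geq 3$. The correct statement you need (vanishing of $S$ exactly on the locus where the apolar net of conics acquires a base point or grows in dimension) is essentially what must be \emph{proved}, and in the paper this is done by the canonical-form case check rather than cited.

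Two further points. First, for $n=3$, $d=2$ only $V=\CC[y_1,y_2,y_3]_3$; your claim $U=V$ is wrong (e.g.\ $y_1^3$ has $\rank D(F)=1<3$), and your assertion that the rank-drop locus lies in $\overline{Z}$ is not free --- it is exactly the content of the second half of Proposition \ref{onto1}, which you could simply cite. Second, your reverse inclusion $\{S=0\}\subset\overline{Z}$ rests on the irreducibility of $\{S=0\}$ and on exhibiting a single point of a dense subset of $\{S=0\}$ lying in $Z$; this is a workable plan (the Fermat cubic itself serves, and irreducibility follows from $\{S=0\}=\overline{O(c_{1,0})}$), but neither ingredient is actually established in your sketch, whereas the paper gets both inclusions by finite explicit computations. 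As written, the proposal does not constitute a proof: the forward inclusion argument fails on a concrete counterexample and the classical facts invoked are misstated.
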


\begin{proof} We utilize canonical forms of ternary cubics. Namely, every nonzero ternary cubic is linearly equivalent to one of the following:
$$
\begin{array}{ll}
c_{1,t}:=y_1^3+y_2^3+y_3^3+ty_1y_2y_3, & t^3\ne -27,\\
\vspace{-0.3cm}\\
c_2:=y_1^3+y_2^2y_3 & (\hbox{cuspidal cubic}),\\
\vspace{-0.3cm}\\
c_3:=y_1^3+y_1^2y_3+y_2^2y_3 & (\hbox{nodal cubic}),\\
\vspace{-0.3cm}\\
c_4:=y_1^2y_3+y_2y_3^2,\\
\vspace{-0.3cm}\\
c_5:=y_1^3+y_1y_2y_3,\\
\vspace{-0.3cm}\\
c_6:=y_1y_2y_3,\\
\vspace{-0.3cm}\\
c_7:=y_1^2y_2+y_1y_2^2,\\
\vspace{-0.3cm}\\
c_8:=y_1^2y_2,\\
\vspace{-0.3cm}\\
c_9:=y_1^3
\end{array}
$$
(see, e.g., \cite[p.~44]{K}). Using formula (\ref{invs}) it is now easy to deduce
$$
\{S=0\}=\{0\}\cup O(c_{1,0})\cup O(c_2)\cup O(c_4)\cup O(c_7)\cup O(c_8)\cup O(c_9),
$$
where for a ternary cubic $c$ we denote by $O(c)$ its $\GL_3$-orbit. In particular, we have $\{S=0\}=\overline{O(c_{1,0})}$, which is the closure of the locus of ternary forms representable as the sum of the cubes of three linear forms (cf. \cite[Theorems 2.1, 2.2]{Ba} and  \cite[Proposition 5.13.2]{DK}).

To see that $\im(\bA)$ does not intersect the zero locus of $S$, we find the degree two component of the annihilator of each of the cubics $c_{1,0}, c_2, c_4, c_7, c_8, c_9$:
$$
\begin{array}{l}
c_{1,0}^{\perp}\cap\CC[x_1,x_2,x_3]_2=\langle x_1x_2, x_1x_3, x_2x_3\rangle,\\
\vspace{-0.3cm}\\
c_2^{\perp}\cap\CC[x_1,x_2,x_3]_2=\langle x_1x_2,x_1x_3,x_3^2\rangle,\\
\vspace{-0.3cm}\\
c_4^{\perp}\cap\CC[x_1,x_2,x_3]_2=\langle x_1^2-x_2x_3,x_1x_2,x_2^2\rangle,\\
\vspace{-0.3cm}\\
c_7^{\perp}\cap\CC[x_1,x_2,x_3]_2=\langle x_1^2+x_2^2-x_1x_2,x_1x_3,x_2x_3,x_3^2\rangle,\\
\vspace{-0.3cm}\\
c_8^{\perp}\cap\CC[x_1,x_2,x_3]_2=\langle x_1x_3,x_2^2,x_2x_3,x_3^2\rangle,\\
\vspace{-0.3cm}\\
c_9^{\perp}\cap\CC[x_1,x_2,x_3]_2=\langle x_1x_2,x_1x_3,x_2^2,x_2x_3,x_3^2\rangle.
\end{array}
$$  
We thus see that for the cubics $c_7,c_8$, $c_9$ the corresponding annihilator components have dimension greater than 3 and that in the remaining situations they have zeroes away from the origin. It then follows that
$$
\im(\bA)\subset \CC[y_1,y_2,y_3]_{3}\setminus\{S=0\}.
$$

In order to show that $\bA$ maps $(\CC[x_1,x_2,x_3]_2^{\oplus 3})_{\Res}$ onto $\CC[y_1,y_2,y_3]_{3}\setminus\{S=0\}$, we need to prove that each of the cubics $c_{1,t}$, $c_3$, $c_5$, $c_6$ lies in $\im(\bA)$, where $t\ne 0$,\linebreak $t^3\ne 216$ (notice that $c_{1,0}$ and $c_{1,\tau}$ with $\tau^3=216$ are linearly equivalent---see, e.g., \cite[p.~603]{AIK}). First of all, $c_{1,t}$, with $t\ne 0$, $t^3\ne 216$,  is proportional to the associated form of the nondegenerate cubic $c_{1,-18/t}$ and $c_6$ to the associated form of the nondegenerate cubic $c_{1,0}$ (see, e.g., \cite[Section 2.2]{AIK}). Next, we calculate the degree two component of the annihilator of each of the cubics $c_3, c_5$:
$$
\begin{array}{l}
c_3^{\perp}\cap\CC[x_1,x_2,x_3]_2=\langle x_1^2-x_2^2-3x_1x_3,x_1x_2,x_3^2\rangle,\\
\vspace{-0.3cm}\\
c_5^{\perp}\cap\CC[x_1,x_2,x_3]_2=\langle x_1^2-6x_2x_3,x_2^2,x_3^2\rangle.
\end{array}
$$
This shows that $c_3,c_5$ lie in $U_{\Res}$ hence in $\im(\bA)$. 

The proof is now complete. \end{proof}

\end{document}